\numberwithin{equation}{section}
\newcommand{\bbr}[1]{\left[ {#1} \right]}
\newcommand{\bpa}[1]{\left( {#1} \right)}
\newcommand{\bst}[1]{\left\{ {#1} \right\}}
\newcommand{\bag}[1]{\left\langle {#1} \right\rangle}
\newcommand{\cm}{,\;}
\newcommand{\abs}[1]{\left| {#1} \right|}
\newcommand{\eqal}[1]{\begin{equation}\begin{aligned} #1 \end{aligned}\end{equation}}
\newcommand{\eqals}[1]{\begin{equation*}\begin{aligned} #1 \end{aligned}\end{equation*}}
\newcommand{\idx}[2]{{#1}^{({#2})}}
\newcommand{\mb}[1]{\mathbb{{#1}}}
\newcommand{\wt}{\widetilde}
\renewcommand\thesection{\Roman{section}} 
\renewcommand\thesubsection{\Alph{subsection}} 
\renewcommand\thesubsubsection{\arabic{subsubsection}}
\titleformat{\section}[block]{\Large\scshape\centering}{\thesection.}{2em}{} 
\titleformat{\subsection}[block]{\large\scshape\centering}{\thesection.\thesubsection.}{1em}{} 
\titleformat{\subsubsection}[block]{\scshape\centering}{\thesubsection.\thesubsubsection}{1em}{} 
\newtheorem*{theorem*}{Theorem}
\newtheorem{theorem}{Theorem}[section]
\newtheorem{assumption}{Assumption}[section]
\newtheorem*{lemma*}{Lemma}
\newtheorem{lemma}{Lemma}[section]
\newtheorem*{proposition*}{Proposition}
\newtheorem*{remark*}{Remark}
\newtheorem{remark}{Remark}[section]
\theoremstyle{definition}
\title{\vspace{0mm}\fontsize{14pt}{1pt}\selectfont\textbf{On the convergence of BFGS on a class of piecewise linear non-smooth functions}} 
\author{
\textsc{Yuchen Xie}\thanks{Department of Industrial Engineering and Management Sciences, Northwestern University, Evanstion IL 60208, USA. \href{mailto:ycxie@u.northwestern.edu}{ycxie@u.northwestern.edu}} \quad
\textsc{Andreas W\"achter}\thanks{Department of Industrial Engineering and Management Sciences, Northwestern University, Evanstion IL 60208, USA. \href{mailto:andreas.waechter@u.northwestern.edu}{andreas.waechter@northwestern.edu}}
}
\begin{document}

\nointerlineskip  
\let\snewpage \newpage
\let\newpage \relax
\maketitle
\let \newpage \snewpage

\thispagestyle{fancy}


%
\begin{abstract}

\noindent The quasi-Newton Broyden--Fletcher--Goldfarb--Shanno (BFGS) method has proven to be very reliable and efficient for the minimization of smooth objective functions since its inception in the 1960s.  Recently, it was observed empirically that it also works remarkably well for non-smooth problems when combined with the Armijo-Wolfe line search, but only very limited theoretical convergence theory could be established so far.  In this paper, we expand these results by considering convex piecewise linear functions with two pieces that are not bounded below.  We prove that the algorithm always terminates in a finite number of iterations, eventually generating an unbounded direction.  In other words, in contrast to the gradient method, the BFGS algorithm does not converge to a non-stationary point.

\end{abstract}
%
%

\section{Introduction}\label{sec:introduction}
Quasi-Newton methods are a class of optimization algorithms for the minimization of smooth objective functions. 
They utilize changes in the gradient between iterations to construct quadratic models and maintain the fast local convergence speed of Newton's method when the objective function is smooth, without the need of second order derivatives. Among quasi-Newton methods, the Broyden-Fletcher-Goldfarb-Shanno (BFGS) update 
has been one of the most successful algorithms for smooth optimization problems. 

Since quasi-Newton methods involve approximations to the Hessian of the function under consideration, one might expect that they would not work for non-differentiable objective functions for which the Hessian does not exist at all points. Surprisingly, Lewis and  Overton \cite{lewis2013nonsmooth} observed that in practice the BFGS method, paired with the Armijo-Wolfe line search, converges reliably to local minimizers of non-smooth problems. 
In addition, they observed that the Hessian approximation 
 becomes increasingly ill-conditioned when the method converges to a minimizer at which the function is not differentiable.  
%
In that case, the eigenvalues along directions corresponding to the U-subspace, the manifold in which the function is differentiable, remain finite, while the directions corresponding to the orthogonal V-subspace grow unbounded. 
 This indicates that the BFGS update is able to approximate the infinite curvature of a non-smooth function and makes use of this knowledge to facilitate the minimization process.


Despite the empirical success of the BFGS method on non-smooth  problems, its theoretical convergence properties remain elusive and only very limited convergence theory could be established so far. For smooth convex functions, the BFGS method has been shown to converge to minimizers \cite{powell1976some}, with a local super-linear rate when the objective is strongly convex \cite{byrd1989tool}. On the other hand, counter examples of non-convex smooth functions exists for which the BFGS methods does not converge \cite{dai2002convergence,mascarenhas2004bfgs,dai2013perfect}.

However, for the non-smooth case, establishing a general convergence proof as well as a convergence rate analysis remains an open question. There have been only a couple of publications which consider very special classes of non-smooth functions.  
Lewis and  Overton \cite{lewis2013nonsmooth} showed that BFGS with exact line search converges globally to the minimizer for the $\ell_2$-norm function in the $2$-dimensional space. 
More recently, Guo and Lewis \cite{guo2017bfgs} showed that the BFGS method with Armijo-Wolfe line search converges globally to the minimizer for convex functions if the minimizer is unique and  the only point of non-differentiability.  They combine Powell's original proof \cite{powell1976some} for the smooth and convex case with an argument which uses a sequence of convex smooth functions that differ from the non-smooth function only in a shrinking neighborhood of the non-smooth point.  
In particular, this extends the result of \cite{lewis2013nonsmooth} for the $\ell_2$-norm objective by permitting an inexact Armijo-Wolfe line search and dimensions $n$ larger than 2. However, their argument cannot be extended to non-smooth functions with non-isolated points of non-differentiability, such as the $\ell_1$-norm in $n$-dimensional space.
On the other hand, there have been attempts to modify the BFGS method so that convergence can be guaranteed, such as \cite{curtis2015quasi}, but these methods do not address the convergence for the unaltered BFGS method.

While it remains unknown how to prove convergence of the unadulterated BFGS method for general convex non-smooth functions, in this paper we 
analyze the special case of convex piecewise linear functions with only two pieces that are not bounded below. 
For such functions, the points of non-differentiability lie on a straight line of dimension 1, which makes it characteristically different from the functions with isolated points of non-differentiability analyzed in \cite{guo2017bfgs}.  While being very simple, this class of functions demonstrates a fundamental difference between the gradient method and the BFGS method.  For a particular instance of this class, Asl and Overton \cite{asl2017analysis} showed that the gradient descent method with line search generates a sequence of iterates converging to a non-stationary point on the line of points of non-differentiability.  
In contrast to this, we give a proof that the BFGS method will never generate iterates converging to a point of non-differentiability for these functions.  
Instead, we show that it always terminates in a finite number of iterations, eventually generating a search direction in which the objective is unbounded below, driving the function value to $-\infty$.

One immediate consequence of our result is that when the BFGS algorithm is applied to a piecewise linear continuous function, the iterates cannot converge to a non-stationary point $x_*$ that lies in manifold of non-differentiability that has dimension at most 1; see Remark~\ref{rem_generalized}.  For example, for $f(x)=\|x\|_1$, the method cannot converge to a limit point at which exactly one of the component $\idx{x}{i}_*$ is zero.
Of course, one would like to prove much stronger convergence results; for example, that any limit point of the iterates of the algorithm is a stationary point, for general non-smooth objective functions that are bounded below.
%
We hope that this paper takes another illuminating step towards this goal by offering proof techniques that are fundamentally different from those in \cite{lewis2013nonsmooth} and \cite{guo2017bfgs}.
%


The paper is organized as follows. After stating the problem in Section~\ref{sec:problem}, we define some iterative quantities that are useful for our analysis in Section~\ref{sec:iter_relations}. In Section~\ref{sec:mainresults} we prove our main results. In Section~\ref{sec:numerical} we provide some numerical results, examining the quantities defined in our theoretical analysis. We finish with some concluding remarks in Section~\ref{sec:conclude}.

\textbf{Notation.}
The $i$-th component of a vector $x\in\mb{R}^n$ is denoted by $\idx{x}{i}$.  For $K\in\mb{N}^+$ we define the set $[K]=\{0,1,2,\ldots,K\}$.


\section{Description of the problem}\label{sec:problem}

We are concerned with the standard BFGS method combined with the Armijo-Wolfe line search, when it is applied to the problem
\eqal{
	\label{eq_problem_definition}
	\min_{x \in \mb{R}^n} \; f(x) = \abs{\idx{x}{1}} + \sum_{i = 2}^n \idx{x}{i} \cm (n \in \mb{N}^+, n \geq 2).
}


Let $\{x_0, x_1, x_2, ..., x_k, ...\}$ be the iterates generated by the algorithm defined as follows \cite{wright2008numerical}.  The method starts with an initial iterates $x_0$ and an initial positive definite approximation $H_0$ of the inverse Hessian. At iteration $k$, a search direction, denoted as $p_k$,  is computed by
\eqals{
	p_k = - H_k \nabla f(x_k).
}
Then, a line search method chooses a step size $\alpha_k>0$ that satisfies the Armijo-Wolfs conditions, namely,
\begin{align}
	f(x_k + \alpha p_k) & \leq f(x_k) + c_1 \alpha_k \bbr{\nabla f(x_k)}^\intercal p_k  \label{eq_suff_dec}\\
	\bbr{\nabla f(x_k + \alpha p_k)}^\intercal p_k & \geq c_2 \bbr{\nabla f(x_k)}^\intercal p_k, \label{eq_curv}
\end{align}
where $0 < c_1 < c_2 < 1$ are the parameters of the line search algorithm. We shall call these two conditions above the \textbf{sufficient decrease condition} and the \textbf{curvature conditions}, respectively. Then, the next iterate is computed by 
\eqals{
	x_{k+1} = x_k + \alpha_k p_k.
}
The inverse Hessian approximation is updated by the BFGS formula
\eqal{\label{eq_H_update}
	H_{k+1} = \bbr{I - \rho_k {s_k}{y_k}^\intercal} H_k \bbr{I - \rho_k {y_k}{s_k}^\intercal} + \rho_k s_k {s_k}^\intercal,
}
where $s_k = x_{k+1} - x_k$, $y_k = \nabla f(x_{k+1}) - \nabla f(x_k)$, and $\rho_k = \bpa{{s_k}^\intercal y_k}^{-1}$. It can be shown that if $\alpha_k$ is chosen to satisfy the Armijo-Wolfe conditions and if $H_k$ is positive definite, then $H_{k+1}$ is also positive definite. 

We make the following assumption throughout the paper.
\begin{assumption}
\label{assumption_non_smooth_points_always_avoided}
The function $f(.)$ is differentiable at all iterates generated by the BFGS algorithm. This is equivalent to saying that $\idx{x_k}{1} \neq 0$ for all $k$.  
\end{assumption}
Note, that the set of all step sizes that satisfy the Armijo-Wolfe conditions at iteration $k$, if not empty, is always an open set. Therefore, as long as any acceptable step size exists, it is always possible to find a step size such that $\idx{x}{1}_{k+1} \neq 0$. 
This assumption has also been made in \cite{lewis2013nonsmooth}.  Curtis and Que \cite{curtis2015quasi} describe a practical technique to enforce it.

Now, since the function $f(.)$ is not bounded below, it might occur that in a certain iteration the line search algorithm is unable to produce a step size $\alpha_k>0$ that satisfies the Armijo-Wolfe conditions. When this happens, we say that \textbf{the algorithm terminates}. In practice, we observe that the BFGS method applied to \eqref{eq_problem_definition} with \textbf{any} initialization always terminates in a finite number of iterations. In particular, the termination is always due to the failure of finding a step size that satisfies the curvature condition \eqref{eq_curv}. Note that when this happens, the function along the search direction $f(x_k + t p_k) \cm t > 0$ must be unbounded below, because otherwise an acceptable step size exists \cite{wright2008numerical}. Typical implementations of the Armijo-Wolfe line search  generate a sequence of trial step sizes that converges to $+\infty$ in this case, driving the function value to $-\infty$. In this paper, we aim to prove that this always happens, for any initialization and any choice of $0 < c_1 < c_2 < 1$. 

It is worth pointing out that although we are studying the simple function \eqref{eq_problem_definition}, our results can be immediately generalized to any function in the form
\eqal{\label{eq_g_affine}
	g(x) = \abs{v_1^\intercal x} + v_2^\intercal x,
}
where $v_1, v_2 \in \mb{R}^n$  are linearly independent. This is a direct consequence of the invariance of BFGS method under affine transformations of the variables. Specifically, the BFGS method has the following desirable property \cite{lewis2013nonsmooth}: 
\begin{theorem}\label{thm_equivalent}
Let $A$ be an invertible $n \times n$ matrix. Consider $g(x) = h(A x)$, where $g, h: \mb{R}^n \to \mb{R}$. Let $\bst{x_i}, \bst{\alpha_i}, \bst{H_i}$ be the iterates, step sizes, and inverse Hessian approximations generated by the BFGS method applied to $g(x)$, with initial point $x_0$ and initial inverse Hessian approximation $H_0$. Let $\bst{y_i}, \bst{\beta_i}, \bst{G_i}$ be the iterates, step sizes, and inverse Hessian approximations generated by the BFGS method applied to $h(x)$ with initial point $y_0 = A x_0$ and initial inverse Hessian approximation $G_0 = A H_0 A^\intercal$. 
Then the following holds:
\begin{itemize}
	\item If $\alpha_i = \beta_i \cm \forall i \in [k-1]$, then $y_k = A x_k$ and $G_k = A H_k A^\intercal$
	\item If $\alpha_i = \beta_i \cm \forall i \in [k-1]$, then any acceptable step size $\alpha_k$ is also an acceptable step size $\beta_k$, and vice versa.
\end{itemize}
\end{theorem}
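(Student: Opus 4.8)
\emph{Overall approach.} The statement is the classical affine invariance of BFGS, and I would prove it by induction on the iteration index $k$, the whole argument resting on the chain rule $\nabla g(x)=A^{\intercal}\nabla h(Ax)$ (valid wherever $h$ is differentiable at $Ax$) together with a single substitution into the update \eqref{eq_H_update}. The plan is to show by induction that, whenever $\alpha_i=\beta_i$ for $0\le i\le k-1$, one has $y_k=Ax_k$, $G_k=AH_kA^{\intercal}$, and the BFGS search directions satisfy $q_k=Ap_k$; the step-size equality of the second bullet then comes out at the same time. The base case $k=0$ is immediate: $y_0=Ax_0$ and $G_0=AH_0A^{\intercal}$ hold by construction, with $G_0\succ0$ because $A$ is invertible.

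\emph{Directions and line search.} Assuming the inductive hypothesis at $k$, I would first note that $\nabla g(x_k)=A^{\intercal}\nabla h(Ax_k)=A^{\intercal}\nabla h(y_k)$, so $q_k:=-G_k\nabla h(y_k)=-AH_kA^{\intercal}\nabla h(y_k)=A\bigl(-H_k\nabla g(x_k)\bigr)=Ap_k$. Then, for every $\alpha>0$, $g(x_k+\alpha p_k)=h\bigl(A(x_k+\alpha p_k)\bigr)=h(y_k+\alpha q_k)$ and $\nabla g(x_k+\alpha p_k)^{\intercal}p_k=\nabla h(y_k+\alpha q_k)^{\intercal}Ap_k=\nabla h(y_k+\alpha q_k)^{\intercal}q_k$, and likewise $\nabla g(x_k)^{\intercal}p_k=\nabla h(y_k)^{\intercal}q_k$; so the inequalities \eqref{eq_suff_dec}--\eqref{eq_curv} for $g$ at $x_k$ along $p_k$ with step $\alpha$ are term-for-term the same as those for $h$ at $y_k$ along $q_k$ with step $\alpha$. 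Hence the admissible step-size sets at iteration $k$ coincide, which is exactly what makes the common choice $\alpha_k=\beta_k$ in the theorem meaningful.

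\emph{Propagating the invariant.} Next, with $\alpha_k=\beta_k$ I would get $y_{k+1}=y_k+\beta_kq_k=Ax_k+\alpha_kAp_k=Ax_{k+1}$. Putting $s_k=\alpha_kp_k$, so $\hat s_k:=y_{k+1}-y_k=As_k$, and $\delta_k=\nabla g(x_{k+1})-\nabla g(x_k)$, $\hat\delta_k=\nabla h(y_{k+1})-\nabla h(y_k)$, the chain rule gives $\delta_k=A^{\intercal}\hat\delta_k$, i.e.\ $\hat\delta_k=A^{-\intercal}\delta_k$; hence $\hat s_k^{\intercal}\hat\delta_k=(As_k)^{\intercal}(A^{-\intercal}\delta_k)=s_k^{\intercal}\delta_k$ and the scalar $\rho_k$ in \eqref{eq_H_update} is the same for both runs. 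Substituting $\hat s_k=As_k$, $\hat\delta_k=A^{-\intercal}\delta_k$, $G_k=AH_kA^{\intercal}$ into \eqref{eq_H_update} and using $I-\rho_k\hat s_k\hat\delta_k^{\intercal}=A(I-\rho_ks_k\delta_k^{\intercal})A^{-1}$, $I-\rho_k\hat\delta_k\hat s_k^{\intercal}=A^{-\intercal}(I-\rho_k\delta_ks_k^{\intercal})A^{\intercal}$, and $\rho_k\hat s_k\hat s_k^{\intercal}=A(\rho_ks_ks_k^{\intercal})A^{\intercal}$, the inner string $A^{-1}\!\cdot\! AH_kA^{\intercal}\!\cdot\! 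A^{-\intercal}$ collapses to $H_k$ and I would read off $G_{k+1}=AH_{k+1}A^{\intercal}$, establishing the inductive hypothesis at $k+1$ and closing the induction.

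\emph{Main obstacle.} I do not expect any genuine obstacle: the proof is bookkeeping. The only things to watch are the transposes — each of the two rank-one corrections in \eqref{eq_H_update} carries $A$ on one side and $A^{-\intercal}$ on the other, dual to $\nabla g=A^{\intercal}\nabla h$ — and the coupling of the two conclusions, since $y_{k+1}=Ax_{k+1}$ relies on the shared steplength $\alpha_k=\beta_k$ that is licensed only by the step-size equality already obtained at iteration $k$.
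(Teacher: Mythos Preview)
Your proposal is correct and follows exactly the approach the paper intends: the paper does not actually write out a proof but merely states that the theorem ``can be verified easily using the BFGS update formula \eqref{eq_H_update},'' and your induction via the chain rule $\nabla g(x)=A^{\intercal}\nabla h(Ax)$ together with the substitution into \eqref{eq_H_update} is precisely that verification carried out in full. Your renaming of the gradient difference to $\delta_k$ to avoid clashing with the iterate sequence $\{y_i\}$ is a sensible precaution, and the bookkeeping with $A$, $A^{-1}$, $A^{\intercal}$, $A^{-\intercal}$ is accurate throughout.
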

This theorem can be verified easily using the BFGS update formula \ref{eq_H_update}. 
%
Now, consider the function $g(x)$ defined in \eqref{eq_g_affine}
with $v_1, v_2 \in \mb{R}^n$, $v_1, v_2 \neq 0$ where $v_1, v_2$ are linearly independent. Since $v_1, v_2$ are independent, they can be extended to form a basis $\bst{v_1, v_2, ..., v_n}$. Then the matrix 
\eqals{
	A = \left(
\begin{array}{c}
 v_1^\intercal \\
 \\
 v_2^\intercal-v_3^\intercal \\
 \\	
 v_3^\intercal-v_4^\intercal \\
 \\
 ... \\
 \\
 v_{n-1}^\intercal-v_n^\intercal \\
 \\
 v_n^\intercal \\
\end{array}
\right)
}
is invertible and one can see that 
\eqals{
	f(A x) = \abs{v_1^\intercal x} + \bpa{v_2 - v_3 + v_3 - v_4 + ... + v_{n-1} - v_n + v_n}^\intercal x = \abs{v_1^\intercal x} + v_2^\intercal x = g(x).
}
Therefore, we can use Theorem \ref{thm_equivalent} above to show that applying the BFGS algorithm to $g(x)$ is equivalent to applying it to $f(x)$, only with a different initialization. Hence, by considering the simple case $f(x)$, we can obtain results that are valid for any piecewise linear function with two pieces that is unbounded below.
%
%
%

\section{Iterative relations}\label{sec:iter_relations}

We note that $\nabla f(.)$ is constant in the region $\{x: \idx{x}{1} > 0\}$ and $\{x: \idx{x}{1} < 0\}$. It is then easy to see that the curvature condition \ref{eq_curv} will always ensure that 
\eqals{
	\text{sgn}(\idx{x}{1}_k) = - \text{sgn}(\idx{x}{1}_{k+1}).
}
To simplify the analysis, it is useful to ``reflect'' the iterate $x_k$ at each iteration and work with iterates $\wt{x}_k$ that always stay in $\{\wt{x}: \idx{\wt x}{1} > 0\}$. To this end, we take advantage of the fact that BFGS method is invariant to affine transformation. We define
\eqals{
	V = \text{diag}(-1, 1, 1, ..., 1) \in \mb{R}^{n \times n}
}
and let 
\eqal{
	\label{eq_definition_for_reflected_variables}
	\wt{x}_k  = \bpa{V}^k x_k \quad \text{ and } \quad
	\wt{H}_k = \bpa{V}^k H_k \bpa{V}^k.
}
Note that $V^{-1} = V = V^\intercal$. We will consider $\wt{x}_k$ and $\wt{H}_k$ instead of $x_k$ and $H_k$, so that we can make use of the nice property that $\idx{\wt{x}}{1}_k$ is always positive.

Let 
\eqals{
	\eta & = (1, 1, ..., 1)^\intercal \in \mb{R}^n \\
	\zeta & = (1, 0, ..., 0) ^\intercal \in \mb{R}^n.
}
For future reference, note that
\eqal{\label{eq_Veta}
	V\eta = -\eta \quad\text{ and }\quad
	V\eta = \eta - 2\zeta,
}
as well as
\eqal{\label{eq_grad_f}
	f(\wt x_k) = \eta^\intercal \wt x_k \quad\text{ and } \quad \nabla f(\wt x_k ) = (V)^k \eta.
}
We have the following update formula for $\wt{x}_k$ and $\wt{H}_k$:
\begin{align}	
	\wt{x}_{k+1} & = V(\wt{x}_k + \alpha_k \wt{p}_k) \label{eq_update_formula_for_reflected_variables_x}\\
	\wt{H}_{k+1} & = V\bst{\bbr{I - \wt{\rho}_k \wt{s_k}\wt{y_k}^\intercal} \wt{H}_k \bbr{I - \wt{\rho}_k \wt{y_k}\wt{s_k}^\intercal} + \wt{\rho}_k \wt{s}_k \wt{s_k}^\intercal} V\label{eq_update_formula_for_reflected_variables_H}
\end{align}
where
\eqals{
	\wt{p}_k & = - \wt{H}_k \eta \\
	\wt{s}_k & = \alpha_k \wt{p}_k \\
	\wt{y}_k & = -2 \zeta\\
	\wt{\rho}_k & = \frac{1}{\wt{y}_k^\intercal \wt{s}_k},
}
and according to \eqref{eq_suff_dec} and \eqref{eq_curv} the step size $\alpha_k$ satisfies
\eqal{
	\label{eq_weak_wolfe}
	f(\wt{x}^k + \alpha_k \wt{p}^k) & \leq f(\wt{x}^k) + c_1 \alpha_k \bbr{\nabla f(\wt{x}^k)}^\intercal \wt{p}^k  \\
	\bbr{\nabla f(\wt{x}^k + \alpha_k \wt{p}^k)}^\intercal \wt{p}^k &\geq c_2 \bbr{\nabla f(\wt{x}^k)}^\intercal \wt{p}^k.
}

To simplify notation, for $u, v \in \mb{R}^n$ define the inner product
 $\bag{u, v}_k = u^\intercal \wt{H}_k v$.
Then we can write the update formula \eqref{eq_update_formula_for_reflected_variables_x} and \eqref{eq_update_formula_for_reflected_variables_H}  as
\begin{align}
	\wt{x}_{k+1} & = V(\wt{x}_k - \alpha_k \wt{H}_k \eta)\label{eq_update_formula_for_relfected_variables_compact_x} \\
	\wt{H}_{k+1} & = V\bst{\bbr{I - \frac{\wt{H}_k \eta \zeta^\intercal}{\bag{\eta, \zeta}_k}} \wt{H}_k \bbr{I - \frac{\zeta \eta^\intercal \wt{H}_k}{\bag{\eta, \zeta}_k}} + \frac{1}{2}\alpha_k \frac{\wt{H}_k \eta \eta^\intercal \wt{H}_k}{\bag{\eta, \zeta}_k}} V.\label{eq_update_formula_for_relfected_variables_compact_H}
\end{align}
Similarly, the line search conditions \eqref{eq_weak_wolfe} can be written in a compact form. The sufficient decrease condition becomes
\eqal{
	\label{eq_suff_dec_rewritten}
	\eta^\intercal \wt{x}_{k+1} &\leq \eta^\intercal \wt{x}_{k} - c_1 \alpha_k \bag{\eta, \eta}_k 
}
and the curvature condition becomes
\eqal{
	\label{eq_curv_rewritten}
	\zeta^\intercal \wt{x}_{k+1} & > 0 \\
	-\bag{\eta, \eta}_k + 2 \bag{\eta, \zeta}_k & \geq - c_2 \bag{\eta, \eta}_k.
}
The first inequality captures the fact that the curvature condition enforces $\idx{\wt{x}}{1}_{k}>0$ for all $k$.
Using the update formula \ref{eq_update_formula_for_reflected_variables_x} and \eqref{eq_Veta}, this is equivalent to
\eqal{
	\label{eq_suff_dec_rewritten_update}
	\alpha_k \bbr{2 \bag{\eta, \zeta}_k - \bpa{1-c_1}\bag{\eta, \eta}_k}  \leq 2 \zeta^\intercal \wt{x}_k
}
and
\begin{align}
	\label{eq_curv_rewritten_update_1}
	\alpha_k \bag{\eta, \zeta}_k & > \zeta^\intercal \wt{x}_k \\
	2 \bag{\eta, \zeta}_k & \geq (1- c_2) \bag{\eta, \eta}_k.\label{eq_curv_rewritten_update_2}
\end{align}
By looking at \eqref{eq_update_formula_for_relfected_variables_compact_x}-- \eqref{eq_curv_rewritten_update_2}, we observe that the key quantities determining the step size $\alpha_k$ are $\zeta^\intercal \wt{x}_k$, $\bag{\eta, \eta}_k$, and $\bag{\eta, \zeta}_k$. It turns out that we can keep track of these quantities with update formulas that involve only $\zeta^\intercal \wt{x}_k$, $\bag{\eta, \eta}_k$, $\bag{\eta, \zeta}_k$, and $\bag{\zeta, \zeta}_k$. These are given in the following theorem.
\begin{theorem}
\label{thm_iterates}

If $\bag{\eta, \zeta}_k>0$, we have the following iterative relationships:
\begin{align}
	\bag{\eta, \eta}_{k+1} &= \bbr{\frac{\bag{\eta, \eta}_k \bag{\zeta, \zeta}_k}{\bag{\eta, \zeta}_k^2} - 1}\bag{\eta, \eta}_k + \frac{1}{2} \alpha_k \bbr{\frac{\bag{\eta, \eta}_k^2}{\bag{\eta, \zeta}_k} - 4 \bag{\eta, \eta}_k + 4 \bag{\eta, \zeta}_k} \label{eq_update_formula_for_compact_variables_ee}\\
	\bag{\eta, \zeta}_{k+1} & = \alpha_k \bpa{\bag{\eta, \zeta}_k - \frac{1}{2}\bag{\eta, \eta}_k} \label{eq_update_formula_for_compact_variables_ez}\\
	\bag{\zeta, \zeta}_{k+1} & = \frac{1}{2} \alpha_k \bag{\eta, \zeta}_k\label{eq_update_formula_for_compact_variables_zz}\\
	\zeta^\intercal \wt{x}_{k+1} & = - \zeta^\intercal \wt{x}_k + \alpha_k \bag{\eta, \zeta}_k\label{eq_update_formula_for_compact_variables_zx}.
\end{align}
Furthermore, if a step size $\alpha_k$ exists in iteration $k$, the conditions \eqref{eq_suff_dec_rewritten_update}, \eqref{eq_curv_rewritten_update_1}, and \eqref{eq_curv_rewritten_update_2} must be satisfied.
\end{theorem}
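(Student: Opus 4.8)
The plan is to reduce the four identities to evaluating a handful of scalars built from the bracketed matrix in the BFGS update \eqref{eq_update_formula_for_relfected_variables_compact_H}. Write $\wt{H}_{k+1} = V M V$ with
\[
M = \bbr{I - \frac{\wt{H}_k \eta \zeta^\intercal}{\bag{\eta, \zeta}_k}} \wt{H}_k \bbr{I - \frac{\zeta \eta^\intercal \wt{H}_k}{\bag{\eta, \zeta}_k}} + \frac{\alpha_k}{2}\, \frac{\wt{H}_k \eta \eta^\intercal \wt{H}_k}{\bag{\eta, \zeta}_k},
\]
which is well defined because the hypothesis $\bag{\eta, \zeta}_k > 0$ makes the divisions legitimate (and makes the BFGS curvature pairing $\wt{y}_k^\intercal \wt{s}_k = 2\alpha_k \bag{\eta, \zeta}_k$ positive). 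Since $\bag{u, v}_{k+1} = u^\intercal \wt{H}_{k+1} v = (Vu)^\intercal M (Vv)$ and, by \eqref{eq_Veta}, $V\zeta = -\zeta$ and $V\eta = \eta - 2\zeta$, each of the four right-hand sides will follow once $M\eta$ and $M\zeta$ are known.

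The computational core is the identity $\bbr{I - \zeta \eta^\intercal \wt{H}_k / \bag{\eta, \zeta}_k}\zeta = \zeta - \zeta\,(\eta^\intercal \wt{H}_k \zeta)/\bag{\eta, \zeta}_k = 0$, valid since $\eta^\intercal \wt{H}_k \zeta = \bag{\eta, \zeta}_k$; this annihilates the first summand of $M$ applied to $\zeta$ and leaves $M\zeta = \tfrac{1}{2}\alpha_k \wt{H}_k \eta$ at once. For $M\eta$, first $\bbr{I - \zeta \eta^\intercal \wt{H}_k / \bag{\eta, \zeta}_k}\eta = \eta - \tfrac{\bag{\eta, \eta}_k}{\bag{\eta, \zeta}_k}\zeta$; applying $\wt{H}_k$ and then the left factor $\bbr{I - \wt{H}_k \eta \zeta^\intercal / \bag{\eta, \zeta}_k}$, the rank-one corrections telescope (using $\zeta^\intercal \wt{H}_k \eta = \bag{\eta, \zeta}_k$ and $\zeta^\intercal \wt{H}_k \zeta = \bag{\zeta, \zeta}_k$), and after adding $\tfrac{\alpha_k}{2}\tfrac{\wt{H}_k \eta \eta^\intercal \wt{H}_k}{\bag{\eta, \zeta}_k}\eta = \tfrac{\alpha_k \bag{\eta, \eta}_k}{2 \bag{\eta, \zeta}_k}\wt{H}_k \eta$ one gets
\[
M\eta = \bpa{\frac{\bag{\eta, \eta}_k \bag{\zeta, \zeta}_k}{\bag{\eta, \zeta}_k^2} + \frac{\alpha_k \bag{\eta, \eta}_k}{2\bag{\eta, \zeta}_k}} \wt{H}_k \eta - \frac{\bag{\eta, \eta}_k}{\bag{\eta, \zeta}_k} \wt{H}_k \zeta.
\]
Taking $\eta^\intercal(\cdot)$ and $\zeta^\intercal(\cdot)$ of $M\eta$ and $M\zeta$ yields the scalars $\eta^\intercal M\eta$, $\eta^\intercal M\zeta = \zeta^\intercal M\eta$ ($M$ is symmetric, since $\wt{H}_k$ is), and $\zeta^\intercal M\zeta = \tfrac{1}{2}\alpha_k\bag{\eta, \zeta}_k$; substituting them into $\bag{\eta, \eta}_{k+1} = (\eta - 2\zeta)^\intercal M (\eta - 2\zeta)$, $\bag{\eta, \zeta}_{k+1} = -(\eta - 2\zeta)^\intercal M \zeta$, and $\bag{\zeta, \zeta}_{k+1} = \zeta^\intercal M \zeta$ and collecting terms reproduces \eqref{eq_update_formula_for_compact_variables_ee}--\eqref{eq_update_formula_for_compact_variables_zz}. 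Identity \eqref{eq_update_formula_for_compact_variables_zx} is immediate from \eqref{eq_update_formula_for_relfected_variables_compact_x} and $V\zeta = -\zeta$: $\zeta^\intercal \wt{x}_{k+1} = (V\zeta)^\intercal(\wt{x}_k - \alpha_k \wt{H}_k \eta) = -\zeta^\intercal \wt{x}_k + \alpha_k \bag{\eta, \zeta}_k$. For the final assertion there is nothing new to prove: the discussion preceding the theorem already establishes that, via \eqref{eq_update_formula_for_reflected_variables_x} and \eqref{eq_Veta}, the Armijo--Wolfe conditions \eqref{eq_weak_wolfe} satisfied by $\alpha_k$ are equivalent to \eqref{eq_suff_dec_rewritten_update}, \eqref{eq_curv_rewritten_update_1}, and \eqref{eq_curv_rewritten_update_2}.

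I do not expect a genuine obstacle; the argument is pure bookkeeping. The two places that demand care are that the three matrix factors inside $M$ do not commute, so one must apply $M$ to $\eta$ and to $\zeta$ in the correct order rather than expanding $M$ as a sum of rank-one matrices; and the $V$-sign accounting, since $V\eta = \eta - 2\zeta$ rather than $\pm\eta$ means the cross terms $-2\eta^\intercal M\zeta - 2\zeta^\intercal M\eta$ together with $4\zeta^\intercal M\zeta$ in the expansion of $\bag{\eta, \eta}_{k+1}$ are exactly what produce the $-4\bag{\eta, \eta}_k + 4\bag{\eta, \zeta}_k$ inside the $\alpha_k$-coefficient of \eqref{eq_update_formula_for_compact_variables_ee}, and it is easy to miscount one of them.
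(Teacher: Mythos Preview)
Your proposal is correct and follows essentially the same route as the paper. Both hinge on the annihilation identity---the paper uses its left form $\zeta^\intercal\bbr{I - \wt{H}_k\eta\zeta^\intercal/\bag{\eta,\zeta}_k}=0$ to compute each scalar $u^\intercal M v$ directly, whereas you use the transposed right form to compute the vectors $M\zeta$ and $M\eta$ once and then read off all three scalars; this is just a minor reorganization of the same bookkeeping.
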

\begin{proof}[Proof of theorem \ref{thm_iterates}]

For \eqref{eq_update_formula_for_compact_variables_zz}, we have from \eqref{eq_Veta} and \eqref{eq_update_formula_for_relfected_variables_compact_H}
\eqals{
	\bag{\zeta, \zeta}_{k+1} & = \zeta^\intercal V\bst{\bbr{I - \frac{\wt{H}_k \eta \zeta^\intercal}{\bag{\eta, \zeta}_k}} \wt{H}_k \bbr{I - \frac{\zeta \eta^\intercal \wt{H}_k}{\bag{\eta, \zeta}_k}} + \frac{1}{2}\alpha_k \frac{\wt{H}_k \eta \eta^\intercal \wt{H}_k}{\bag{\eta, \zeta}_k}} V \zeta \\
	& = \zeta^\intercal \bst{\bbr{I - \frac{\wt{H}_k \eta \zeta^\intercal}{\bag{\eta, \zeta}_k}} \wt{H}_k \bbr{I - \frac{\zeta \eta^\intercal \wt{H}_k}{\bag{\eta, \zeta}_k}} + \frac{1}{2}\alpha_k \frac{\wt{H}_k \eta \eta^\intercal \wt{H}_k}{\bag{\eta, \zeta}_k}} \zeta.
}
Note that
\eqal{\label{eq_zImH}
	\zeta^\intercal\bbr{I - \frac{\wt{H}_k \eta \zeta^\intercal}{\bag{\eta, \zeta}_k}} =  {\zeta^\intercal - \frac{\zeta^\intercal \wt{H}_k \eta \zeta^\intercal}{\bag{\eta, \zeta}_k}} 
	=  {\zeta^\intercal - \frac{\bag{\eta, \zeta}_k \zeta^\intercal}{\bag{\eta, \zeta}_k}} 
	=  {\zeta^\intercal - \zeta^\intercal} = 0
}
Thus,
\eqals{
	\bag{\zeta, \zeta}_{k+1} & = \zeta^\intercal \bst{\bbr{I - \frac{\wt{H}_k \eta \zeta^\intercal}{\bag{\eta, \zeta}_k}} \wt{H}_k \bbr{I - \frac{\zeta \eta^\intercal \wt{H}_k}{\bag{\eta, \zeta}_k}} + \frac{1}{2}\alpha_k \frac{\wt{H}_k \eta \eta^\intercal \wt{H}_k}{\bag{\eta, \zeta}_k}} \zeta \\
	& = \zeta^\intercal \bpa{\frac{1}{2}\alpha_k \frac{\wt{H}_k \eta \eta^\intercal \wt{H}_k}{\bag{\eta, \zeta}_k}} \zeta  = \frac{1}{2} \alpha_k \bag{\eta, \zeta}_k.
}

For \eqref{eq_update_formula_for_compact_variables_ez}, we have 
\eqals{
	\bag{\eta, \zeta}_{k+1} & \stackrel{\eqref{eq_update_formula_for_relfected_variables_compact_H}}{=} \zeta^\intercal V\bst{\bbr{I - \frac{\wt{H}_k \eta \zeta^\intercal}{\bag{\eta, \zeta}_k}} \wt{H}_k \bbr{I - \frac{\zeta \eta^\intercal \wt{H}_k}{\bag{\eta, \zeta}_k}} + \frac{1}{2}\alpha_k \frac{\wt{H}_k \eta \eta^\intercal \wt{H}_k}{\bag{\eta, \zeta}_k}} V \eta \\
	& \stackrel{\eqref{eq_Veta}}{=} - \zeta^\intercal \bst{\bbr{I - \frac{\wt{H}_k \eta \zeta^\intercal}{\bag{\eta, \zeta}_k}} \wt{H}_k \bbr{I - \frac{\zeta \eta^\intercal \wt{H}_k}{\bag{\eta, \zeta}_k}} + \frac{1}{2}\alpha_k \frac{\wt{H}_k \eta \eta^\intercal \wt{H}_k}{\bag{\eta, \zeta}_k}} (\eta - 2 \zeta) \\
	& \stackrel{\eqref{eq_zImH}}{=} - \zeta^\intercal \bpa{\frac{1}{2}\alpha_k \frac{\wt{H}_k \eta \eta^\intercal \wt{H}_k}{\bag{\eta, \zeta}_k}} (\eta - 2 \zeta) \\
	& = \frac{1}{2} \alpha_k \bbr{-\frac{\bag{\eta, \zeta}_k \bag{\eta, \eta}_k}{\bag{\eta, \zeta}_k} + 2 \frac{\bag{\eta, \zeta}_k^2}{\bag{\eta, \zeta}_k}} \\
	& = \alpha_k \bpa{\bag{\eta, \zeta}_k - \frac{\bag{\eta, \eta}_k}{2}}.
}

For \eqref{eq_update_formula_for_compact_variables_ee}, we have
\eqals{
	\bag{\eta, \eta}_{k+1} & \stackrel{\eqref{eq_update_formula_for_relfected_variables_compact_H}}{=} \eta^\intercal V\bst{\bbr{I - \frac{\wt{H}_k \eta \zeta^\intercal}{\bag{\eta, \zeta}_k}} \wt{H}_k \bbr{I - \frac{\zeta \eta^\intercal \wt{H}_k}{\bag{\eta, \zeta}_k}} + \frac{1}{2}\alpha_k \frac{\wt{H}_k \eta \eta^\intercal \wt{H}_k}{\bag{\eta, \zeta}_k}} V \eta \\
	&\stackrel{\eqref{eq_Veta}}{=} \bpa{\eta^\intercal - 2 \zeta^\intercal} \bst{\bbr{I - \frac{\wt{H}_k \eta \zeta^\intercal}{\bag{\eta, \zeta}_k}} \wt{H}_k \bbr{I - \frac{\zeta \eta^\intercal \wt{H}_k}{\bag{\eta, \zeta}_k}} + \frac{1}{2}\alpha_k \frac{\wt{H}_k \eta \eta^\intercal \wt{H}_k}{\bag{\eta, \zeta}_k}} \bpa{\eta -2 \zeta}\\
	& \stackrel{\eqref{eq_zImH}}{=}  \eta^\intercal\bbr{I - \frac{\wt{H}_k \eta \zeta^\intercal}{\bag{\eta, \zeta}_k}} \wt{H}_k \bbr{I - \frac{\zeta \eta^\intercal \wt{H}_k}{\bag{\eta, \zeta}_k}}\eta + \frac{1}{2} \alpha_k  \bpa{\eta^\intercal - 2 \zeta^\intercal} \frac{\wt{H}_k \eta \eta^\intercal \wt{H}_k}{\bag{\eta, \zeta}_k} \bpa{\eta -2 \zeta} \\
	& = \bbr{\frac{\bag{\eta, \eta}_k \bag{\zeta, \zeta}_k}{\bag{\eta, \zeta}_k^2} - 1}\bag{\eta, \eta}_k + \frac{1}{2} \alpha_k \bbr{\frac{\bag{\eta, \eta}_k^2}{\bag{\eta, \zeta}_k} - 4 \bag{\eta, \eta}_k + 4 \bag{\eta, \zeta}_k}.
}
%

For \eqref{eq_update_formula_for_compact_variables_zx}, we have
\eqals{
	\zeta^\intercal \wt{x}_{k+1} \stackrel{\eqref{eq_update_formula_for_relfected_variables_compact_x}}{=} \zeta^\intercal V (\wt{x}_k - \alpha_k \wt{H}_k \eta)
	\stackrel{\eqref{eq_Veta}}{=} - \zeta^\intercal (\wt{x}_k - \alpha_k \wt{H}_k \eta) = - \zeta^\intercal \wt{x}_k + \alpha_k \bag{\eta, \zeta}_k.
}

Finally, the constraints on $\alpha_k$ have been derived before the statement of the theorem. 
\end{proof}

We observe that the quantity $\bag{\zeta, \zeta}_k$ appears on the right-hand side of the update formulas in Theorem~\ref{thm_iterates} only in the form $\bag{\eta, \eta}_k \bag{\zeta, \zeta}_k - \bag{\eta, \zeta}_k^2$. Therefore, it is more convenient to define 
\eqals{
	D_k = \bag{\eta, \eta}_k \bag{\zeta, \zeta}_k - \bag{\eta, \zeta}_k^2
}
and rewrite \eqref{eq_update_formula_for_compact_variables_ee} and \eqref{eq_update_formula_for_compact_variables_zz} as
\begin{align}
\bag{\eta, \eta}_{k+1} &= \frac{D_k}{\bag{\eta, \zeta}_k^2} \bag{\eta, \eta}_k + \frac{1}{2} \alpha_k \frac{1}{\bag{\eta, \zeta}_k} \bpa{\bag{\eta, \eta}_k - 2\bag{\eta, \zeta}_k}^2 \label{eq_update_formula_for_compact_variables_ee_D} \\
	D_{k+1} & = \frac{1}{2}\alpha_k \frac{\bag{\eta, \eta}_k}{\bag{\eta, \zeta}_k} D_k, \label{eq_update_formula_for_compact_variables_D}
\end{align}
which greatly simplifies the iterative relations. 
Since $\tilde H_k$ is positive definite and $\eta\neq\zeta$, the Cauchy-Schwartz inequality implies that
\eqal{\label{eq_D_positive}
D_k > 0.
}

\section{Main result}\label{sec:mainresults}

We wish to show that the algorithm must terminate in finite steps for any choices of $c_1, c_2$ such that $0 < c_1 < c_2 < 1$. We can prove the following theorem.

\begin{theorem}[Main result]\label{thm:main}
Consider the standard BFGS method using the Armijo-Wolfe line search with parameters $0 < c_1 < c_2 < 1$ (referred to as ``the algorithm'') applied to the function $f$ defined in \eqref{eq_problem_definition}. Assume the step sizes are chosen in a way such that Assumption \ref{assumption_non_smooth_points_always_avoided} is satisfied. Then the algorithm terminates in finitely many iterations.
\end{theorem}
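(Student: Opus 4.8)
The plan is to argue by contradiction: assume the algorithm never terminates, so that at every iteration $k$ the Armijo--Wolfe line search returns an acceptable $\alpha_k$, and derive a contradiction from the recursions of Theorem~\ref{thm_iterates}. First I would record the positivity that non‑termination forces for every $k$. Positive definiteness of $\wt{H}_k$ gives $\bag{\eta,\eta}_k,\bag{\zeta,\zeta}_k>0$ and, via \eqref{eq_D_positive}, $D_k>0$; the curvature condition \eqref{eq_curv_rewritten_update_2} gives $2\bag{\eta,\zeta}_k\ge(1-c_2)\bag{\eta,\eta}_k>0$, hence $\bag{\eta,\zeta}_k>0$ (so Theorem~\ref{thm_iterates} applies at every step). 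Set $u_k:=\bag{\eta,\eta}_k/\bag{\eta,\zeta}_k>0$ and $m_k:=2\bag{\eta,\zeta}_k-\bag{\eta,\eta}_k$. Since \eqref{eq_update_formula_for_compact_variables_ez} reads $\bag{\eta,\zeta}_{k+1}=\tfrac12\alpha_k m_k$ and $\bag{\eta,\zeta}_{k+1}>0$ with $\alpha_k>0$, we must have $m_k>0$, i.e.\ $u_k\in(0,2)$, for all $k$ — in particular $2-(1-c_1)u_k>2c_1>0$. Finally \eqref{eq_curv_rewritten_update_1} gives $\zeta^\intercal\wt{x}_k>0$ for $k\ge1$.

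The heart of the argument is to convert the mere existence of an acceptable step into one scalar inequality. I would compute (from \eqref{eq_update_formula_for_compact_variables_ez} and \eqref{eq_update_formula_for_compact_variables_ee_D}) that $m_{k+1}=\frac{\bag{\eta,\eta}_k}{2\bag{\eta,\zeta}_k^{2}}\bigl(\alpha_k m_k\bag{\eta,\zeta}_k-2D_k\bigr)$; since $m_{k+1}>0$, this is a \emph{lower} bound $\alpha_k\bag{\eta,\zeta}_k>2D_k/m_k$. The sufficient decrease condition \eqref{eq_suff_dec_rewritten_update}, after factoring out $\bag{\eta,\zeta}_k$ and dividing by the positive quantity $2-(1-c_1)u_k$, is an \emph{upper} bound $\alpha_k\bag{\eta,\zeta}_k\le 2\,\zeta^\intercal\wt{x}_k/(2-(1-c_1)u_k)$. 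Combining them (using $m_k=(2-u_k)\bag{\eta,\zeta}_k$) yields the feasibility inequality, which must hold at every $k$:
\begin{equation*}
D_k\;<\;\frac{2-u_k}{\,2-(1-c_1)u_k\,}\;\bag{\eta,\zeta}_k\,\zeta^\intercal\wt{x}_k\;<\;\bag{\eta,\zeta}_k\,\zeta^\intercal\wt{x}_k .
\end{equation*}

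I would then pit this against two recursions. From \eqref{eq_update_formula_for_compact_variables_ez} and \eqref{eq_update_formula_for_compact_variables_D} one gets $D_{k+1}/\bag{\eta,\zeta}_{k+1}=\tfrac{u_k}{2-u_k}\,D_k/\bag{\eta,\zeta}_k$; from \eqref{eq_update_formula_for_compact_variables_zx} and the upper bound above, $\zeta^\intercal\wt{x}_{k+1}=\alpha_k\bag{\eta,\zeta}_k-\zeta^\intercal\wt{x}_k\le\tfrac{(1-c_1)u_k}{2-(1-c_1)u_k}\,\zeta^\intercal\wt{x}_k$. Introducing the scalar potential $\Psi_k:=D_k/\bigl(\bag{\eta,\zeta}_k\,\zeta^\intercal\wt{x}_k\bigr)$, positive for $k\ge1$, the feasibility inequality is exactly $\Psi_k<\tfrac{2-u_k}{2-(1-c_1)u_k}<1$, while the two recursions give $\Psi_{k+1}\ge\tfrac{2-(1-c_1)u_k}{(1-c_1)(2-u_k)}\,\Psi_k\ge\tfrac{1}{1-c_1}\,\Psi_k$ (the last step because $u_k>0$ and $c_1>0$). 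Hence $\Psi_k\ge(1-c_1)^{-(k-1)}\Psi_1\to\infty$, contradicting $\Psi_k<1$; so the algorithm terminates in finitely many iterations.

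I expect the only delicate point to be the bookkeeping that makes the feasibility inequality available: one must be sure that non‑termination forces $m_k>0$ (equivalently $\bag{\eta,\eta}_k<2\bag{\eta,\zeta}_k$) at \emph{every} iteration — this is what keeps $2-(1-c_1)u_k$ positive so the sufficient‑decrease inequality can be solved for $\alpha_k\bag{\eta,\zeta}_k$, and what makes the lower bound from $m_{k+1}>0$ and this upper bound comparable. After that, guessing the potential $\Psi_k=D_k/(\bag{\eta,\zeta}_k\,\zeta^\intercal\wt{x}_k)$ is the only real idea, and its geometric growth $\Psi_{k+1}\ge\Psi_k/(1-c_1)$ against $\Psi_k<1$ closes the argument uniformly in the line‑search parameters. (Minor caveat: $\zeta^\intercal\wt{x}_0$ is positive only when $\idx{x_0}{1}>0$; since the argument can be run from $k=1$, where $\zeta^\intercal\wt{x}_1>0$ holds unconditionally, this causes no trouble.)
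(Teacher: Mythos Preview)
Your argument is correct and genuinely different from the paper's route. The paper works with the quantity $\psi_k=\bag{\eta,\zeta}_k\,\zeta^\intercal\wt{x}_k/D_k$ (the reciprocal of your $\Psi_k$), shows it is monotone decreasing and bounded below by $1$, passes to a limit $\psi^*$, and then carries out a fairly delicate asymptotic analysis: it proves that the normalized step $a_k\to\psi^*$ and that $\delta_k$ converges to a fixed point $\delta^*\in[0,\tfrac12)$, and only then extracts a contradiction from the sufficient-decrease condition in the limit. Your proof bypasses all of this convergence analysis: by combining the sufficient-decrease upper bound on $\alpha_k\bag{\eta,\zeta}_k$ with the exact recursion for $D_k/\bag{\eta,\zeta}_k$, you obtain the clean one-step multiplicative inequality $\Psi_{k+1}\ge \Psi_k/(1-c_1)$, which immediately collides with $\Psi_k<1$. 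This is more elementary and, as a bonus, yields an explicit iteration bound (roughly $k\lesssim \log(1/\Psi_1)/\log\bigl(1/(1-c_1)\bigr)$) that the paper's limiting argument does not provide. The only places one must be careful are exactly the ones you flag: that non-termination forces $m_k>0$ at every step (so $u_k\in(0,2)$ and the division by $2-(1-c_1)u_k>0$ is legitimate), and that $\zeta^\intercal\wt{x}_k>0$ is guaranteed for $k\ge1$; both are handled correctly.
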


First, we develop a lower bound on the step size $\alpha_k$. Define
\eqals{
	\alpha_k^* = \frac{D_k}{\bag{\eta,\zeta}_k^2}\Bigg/\bpa{1 - \frac{1}{2} \frac{\bag{\eta, \eta}_k}{\bag{\eta, \zeta}_k}}.
}
We prove the following lemma:
\begin{lemma}
\label{lemma_lower_bound_on_alpha}
Suppose the algorithm does not terminate up to iteration $K \geq 2$. Then, we have
\begin{align}
	\bag{\eta, \zeta}_k > 0 \cm &\qquad \forall k \in [K],  \label{lemma_lower_bound_on_alpha_1} \\
	\bag{\eta, \zeta}_{k} - \frac{1}{2}\bag{\eta, \eta}_{k} > 0 \cm &\qquad \forall k \in [K-1], \label{lemma_lower_bound_on_alpha_2}\\
        \alpha_k^* > 0, &\qquad \forall k \in [K-1], \text{ and } \label{lemma_lower_bound_on_alpha_star}\\
	 \alpha_k > \frac{D_k}{\bag{\eta,\zeta}_k^2}\Bigg/\bpa{1 - \frac{1}{2} \frac{\bag{\eta, \eta}_k}{\bag{\eta, \zeta}_k}} \cm &\qquad \forall k \in [K-2].\label{lemma_lower_bound_on_alpha_3}
\end{align}
\end{lemma}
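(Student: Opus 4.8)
The plan is to prove the four displays in the order stated, working only from the iterative relations of Theorem~\ref{thm_iterates} — the update \eqref{eq_update_formula_for_compact_variables_ez} for $\bag{\eta,\zeta}_{k+1}$, the rewritten update \eqref{eq_update_formula_for_compact_variables_ee_D} for $\bag{\eta,\eta}_{k+1}$, the curvature inequality \eqref{eq_curv_rewritten_update_2}, and $D_k>0$ from \eqref{eq_D_positive} — together with two standing facts used throughout: $\bag{\eta,\eta}_k=\eta^\intercal\wt{H}_k\eta>0$ for every $k$, by positive definiteness of $\wt{H}_k$; and that the hypothesis ``the algorithm does not terminate up to iteration $K$'' means an acceptable step $\alpha_k>0$ is taken at every iteration $k$ with $0\le k\le K$ (so Theorem~\ref{thm_iterates} applies there). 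Since the argument proceeds by induction on the iteration index, each of the four assertions may be invoked, while proving a later one, at any index at which it has already been established.

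The first three displays are quick. For \eqref{lemma_lower_bound_on_alpha_1}: at each $k\in[K]$ a step is taken, so Theorem~\ref{thm_iterates} yields $2\bag{\eta,\zeta}_k\ge(1-c_2)\bag{\eta,\eta}_k$; as $\bag{\eta,\eta}_k>0$ and $1-c_2>0$, this forces $\bag{\eta,\zeta}_k>0$. For \eqref{lemma_lower_bound_on_alpha_2}: \eqref{eq_update_formula_for_compact_variables_ez} reads $\bag{\eta,\zeta}_{k+1}=\alpha_k\bigl(\bag{\eta,\zeta}_k-\tfrac12\bag{\eta,\eta}_k\bigr)$, and for $k\in[K-1]$ the left side is positive by \eqref{lemma_lower_bound_on_alpha_1} at index $k+1\le K$, with $\alpha_k>0$, so $\bag{\eta,\zeta}_k-\tfrac12\bag{\eta,\eta}_k>0$. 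For \eqref{lemma_lower_bound_on_alpha_star}: rewrite $\alpha_k^*$ as $\bigl(D_k/\bag{\eta,\zeta}_k^2\bigr)\big/\bigl((\bag{\eta,\zeta}_k-\tfrac12\bag{\eta,\eta}_k)/\bag{\eta,\zeta}_k\bigr)$; the numerator is positive by \eqref{eq_D_positive} and \eqref{lemma_lower_bound_on_alpha_1} and the denominator by \eqref{lemma_lower_bound_on_alpha_1}--\eqref{lemma_lower_bound_on_alpha_2}, so $\alpha_k^*>0$ on $[K-1]$.

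The substantive display is \eqref{lemma_lower_bound_on_alpha_3}. I would first derive, by substituting \eqref{eq_update_formula_for_compact_variables_ez} and \eqref{eq_update_formula_for_compact_variables_ee_D} into $\bag{\eta,\zeta}_{k+1}-\tfrac12\bag{\eta,\eta}_{k+1}$ and collecting terms using $\bigl(\bag{\eta,\eta}_k-2\bag{\eta,\zeta}_k\bigr)^2=4\bigl(\bag{\eta,\zeta}_k-\tfrac12\bag{\eta,\eta}_k\bigr)^2$, the identity
\[
\bag{\eta,\zeta}_{k+1}-\tfrac12\bag{\eta,\eta}_{k+1}=\frac{\bag{\eta,\eta}_k}{2\,\bag{\eta,\zeta}_k}\left(\alpha_k\Bigl(\bag{\eta,\zeta}_k-\tfrac12\bag{\eta,\eta}_k\Bigr)-\frac{D_k}{\bag{\eta,\zeta}_k}\right);
\]
so $\alpha_k^*$ is exactly the step at which the left-hand side vanishes. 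For $k\in[K-2]$, the left-hand side is positive by \eqref{lemma_lower_bound_on_alpha_2} at index $k+1\le K-1$ and the prefactor $\bag{\eta,\eta}_k/(2\bag{\eta,\zeta}_k)$ is positive by \eqref{lemma_lower_bound_on_alpha_1}, hence $\alpha_k\bigl(\bag{\eta,\zeta}_k-\tfrac12\bag{\eta,\eta}_k\bigr)>D_k/\bag{\eta,\zeta}_k$. Dividing by $\bag{\eta,\zeta}_k-\tfrac12\bag{\eta,\eta}_k>0$ (this is \eqref{lemma_lower_bound_on_alpha_2} at index $k$), and then by $\bag{\eta,\zeta}_k$ in numerator and denominator, gives $\alpha_k>\bigl(D_k/\bag{\eta,\zeta}_k^2\bigr)\big/\bigl(1-\tfrac12\bag{\eta,\eta}_k/\bag{\eta,\zeta}_k\bigr)=\alpha_k^*$.

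The only genuine computation I anticipate is the verification of the displayed identity; the rest is sign-tracking. The point that requires care is that each assertion feeds the next one an index lower — \eqref{lemma_lower_bound_on_alpha_2} at $k$ from \eqref{lemma_lower_bound_on_alpha_1} at $k+1$, and \eqref{lemma_lower_bound_on_alpha_3} at $k$ from \eqref{lemma_lower_bound_on_alpha_2} at $k+1$ — which is exactly why the ranges contract from $[K]$ to $[K-1]$ to $[K-2]$, and why the hypothesis $K\ge2$ (so that $[K-2]$ is nonempty) appears.
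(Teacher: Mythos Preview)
Your proposal is correct and follows essentially the same approach as the paper's proof: both derive \eqref{lemma_lower_bound_on_alpha_1} from the curvature condition \eqref{eq_curv_rewritten_update_2}, obtain \eqref{lemma_lower_bound_on_alpha_2} by reading \eqref{eq_update_formula_for_compact_variables_ez} at index $k{+}1$, and prove \eqref{lemma_lower_bound_on_alpha_3} by substituting \eqref{eq_update_formula_for_compact_variables_ez} and \eqref{eq_update_formula_for_compact_variables_ee_D} into $\bag{\eta,\zeta}_{k+1}-\tfrac12\bag{\eta,\eta}_{k+1}>0$ and simplifying to $\tfrac12\bag{\eta,\eta}_k\bigl[(1-\tfrac12\bag{\eta,\eta}_k/\bag{\eta,\zeta}_k)\alpha_k-D_k/\bag{\eta,\zeta}_k^2\bigr]>0$ (your displayed identity is the same expression factored as $\tfrac{\bag{\eta,\eta}_k}{2\bag{\eta,\zeta}_k}$ times the bracket). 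The only cosmetic difference is that you explicitly spell out \eqref{lemma_lower_bound_on_alpha_star}, which the paper leaves implicit.
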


\begin{proof}[Proof of Lemma \ref{lemma_lower_bound_on_alpha}]
Since the algorithm does not break down up to iteration $K$, the step size $\alpha_k$ exists and satisfies the Armijo-Wolfe condition for all $k \in [K]$.
Theorem~\ref{thm_iterates} implies that \eqref{eq_curv_rewritten_update_2} holds, and therefore, for \textbf{any} choices of $c_1, c_2$ such that $0 < c_1 < c_2 < 1$, we must always have
\eqals{
	\bag{\eta, \zeta}_k > 0 \cm \forall k \in [K],
}
which proves \eqref{lemma_lower_bound_on_alpha_1}.

This also means that $\bag{\eta, \zeta}_{k+1} > 0$ for any $k \in [K-1]$, and then \eqref{eq_update_formula_for_compact_variables_ez} implies \eqref{lemma_lower_bound_on_alpha_2}, since $\alpha_k>0$.

Equation \eqref{lemma_lower_bound_on_alpha_2} in turn implies for any  $k \in [K-2]$, we have $\bag{\eta, \zeta}_{k+1} - \frac{1}{2}\bag{\eta, \eta}_{k+1} > 0$.  Substituting \eqref{eq_update_formula_for_compact_variables_ez} and \eqref{eq_update_formula_for_compact_variables_ee_D} yields
\eqals{
0 & < \alpha_k \bpa{\bag{\eta, \zeta}_k - \frac{1}{2}\bag{\eta, \eta}_k} - \frac{1}{2} \bst{\frac{D_k}{\bag{\eta, \zeta}_k^2} \bag{\eta, \eta}_k + \frac{1}{2} \alpha_k \frac{1}{\bag{\eta, \zeta}_k} \bpa{\bag{\eta, \eta}_k - 2\bag{\eta, \zeta}_k}^2}  \\
	& = \frac{1}{2}\bag{\eta, \eta}_k \bst{\bpa{1 - \frac{1}{2} \frac{\bag{\eta, \eta}_k}{\bag{\eta, \zeta}_k}} \alpha_k - \frac{D_k}{\bag{\eta,\zeta}_k^2}}.
}
Since $\bag{\eta, \eta}_k > 0$, it is
\eqals{
\bpa{1 - \frac{1}{2} \frac{\bag{\eta, \eta}_k}{\bag{\eta, \zeta}_k}} \alpha_k > \frac{D_k}{\bag{\eta,\zeta}_k^2}.
}
Finally, \eqref{lemma_lower_bound_on_alpha_2} implies that the coefficient before $\alpha_k$ is positive. Therefore, \eqref{lemma_lower_bound_on_alpha_3} holds.
\end{proof}

By Lemma \ref{lemma_lower_bound_on_alpha}, we obtained a lower bound for the step size $\alpha_k$, \textit {if the algorithm does not terminate} at iteration $k+2$.
%
We emphasize that the lower bound for $\alpha_k$ is \emph{not} a consequence of the Armijo-Wolfe condition in the \emph{current} iteration $k$.  Instead, it only must hold when the algorithm will not terminate two iterations \emph{later}.  This fact is important for establishing our main result:  We will show that at some iteration the Armijo-Wolfe conditions imply that $\alpha_k \leq \alpha_k^*$.  This then means that the algorithm will terminate at most 2 iterations later.


To this end, the next lemma establishes bounds that are implied by the Armijo-Wolfe conditions in the \emph{current} iteration.
\begin{lemma}
\label{lemma_bounds_on_alpha_Wolfe}
Suppose the algorithm does not terminate up to iteration $K$. Then, we have
\eqal{
	\label{eq_constraints_alpha_k}
	\frac{\zeta^\intercal \wt{x}_k}{\bag{\eta, \zeta}_k} < \alpha_k < \frac{{\zeta^\intercal \wt{x}_k}}{\bag{\eta, \zeta}_k - \frac{1}{2} \bag{\eta, \eta}_k} \cm \forall k \in [K-1].
}
\end{lemma}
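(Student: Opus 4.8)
The plan is to read off both inequalities directly from the two line-search conditions recast in Theorem~\ref{thm_iterates}, namely \eqref{eq_curv_rewritten_update_1} and the rewritten sufficient-decrease bound \eqref{eq_suff_dec_rewritten_update}. First I would handle the lower bound: inequality \eqref{eq_curv_rewritten_update_1} states $\alpha_k \bag{\eta,\zeta}_k > \zeta^\intercal \wt x_k$, and by Lemma~\ref{lemma_lower_bound_on_alpha}\eqref{lemma_lower_bound_on_alpha_1} we have $\bag{\eta,\zeta}_k > 0$ for all $k\in[K]$ (in particular for $k\in[K-1]$), so dividing through preserves the direction of the inequality and yields $\alpha_k > \zeta^\intercal\wt x_k / \bag{\eta,\zeta}_k$. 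Note this also silently uses $\zeta^\intercal\wt x_k>0$, which is exactly the first curvature inequality in \eqref{eq_curv_rewritten_update} applied at the previous iterate (or, for $k=0$, Assumption~\ref{assumption_non_smooth_points_always_avoided} together with the reflection convention); I would state this explicitly so the quoted lower bound is genuinely positive.

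For the upper bound I would start from \eqref{eq_suff_dec_rewritten_update}, which reads
\eqals{
	\alpha_k \bbr{2\bag{\eta,\zeta}_k - (1-c_1)\bag{\eta,\eta}_k} \leq 2\,\zeta^\intercal\wt x_k.
}
The key observation is that the bracketed coefficient of $\alpha_k$ is strictly positive: by Lemma~\ref{lemma_lower_bound_on_alpha}\eqref{lemma_lower_bound_on_alpha_2} we have $\bag{\eta,\zeta}_k - \tfrac12\bag{\eta,\eta}_k > 0$ for $k\in[K-1]$, i.e. $2\bag{\eta,\zeta}_k > 2\bag{\eta,\eta}_k > (1-c_1)\bag{\eta,\eta}_k$ since $0<c_1<1$ and $\bag{\eta,\eta}_k>0$. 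Hence $2\bag{\eta,\zeta}_k-(1-c_1)\bag{\eta,\eta}_k \geq 2\bag{\eta,\zeta}_k - \bag{\eta,\eta}_k > 0$, and dividing gives
\eqals{
	\alpha_k \leq \frac{2\,\zeta^\intercal\wt x_k}{2\bag{\eta,\zeta}_k - (1-c_1)\bag{\eta,\eta}_k} \leq \frac{2\,\zeta^\intercal\wt x_k}{2\bag{\eta,\zeta}_k - \bag{\eta,\eta}_k} = \frac{\zeta^\intercal\wt x_k}{\bag{\eta,\zeta}_k - \tfrac12\bag{\eta,\eta}_k},
}
where the second inequality uses $(1-c_1)\bag{\eta,\eta}_k < \bag{\eta,\eta}_k$ together with $\zeta^\intercal\wt x_k>0$ to weaken the denominator. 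This is the stated upper bound; the inequality is non-strict from sufficient decrease, but one can promote it to strict by noting that Assumption~\ref{assumption_non_smooth_points_always_avoided} forbids landing exactly on $\{\idx{x}{1}=0\}$, or simply accept $\leq$ and observe the argument using this lemma only needs the weak form — I would check which is actually needed downstream and phrase accordingly.

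The main obstacle here is not any hard estimate but bookkeeping of index ranges and sign conditions: one must be careful that \eqref{lemma_lower_bound_on_alpha_2} is only available on $[K-1]$ (which is exactly the range claimed), that $\bag{\eta,\eta}_k>0$ holds because $\wt H_k$ is positive definite and $\eta\neq 0$, and that $\zeta^\intercal\wt x_k>0$ is legitimately known at step $k$. Once those are pinned down, the proof is a two-line division in each direction. I would also remark that the lemma implicitly requires $\zeta^\intercal\wt x_k / (\bag{\eta,\zeta}_k - \tfrac12\bag{\eta,\eta}_k) > \zeta^\intercal\wt x_k/\bag{\eta,\zeta}_k$, i.e. that the advertised interval for $\alpha_k$ is nonempty; this is automatic since $0 < \bag{\eta,\zeta}_k - \tfrac12\bag{\eta,\eta}_k < \bag{\eta,\zeta}_k$ and $\zeta^\intercal\wt x_k>0$, so no consistency check is actually at stake.
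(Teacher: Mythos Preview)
Your proposal is correct and follows essentially the same route as the paper: the lower bound is read off from \eqref{eq_curv_rewritten_update_1} after dividing by $\bag{\eta,\zeta}_k>0$, and the upper bound from the sufficient-decrease inequality after dividing by the positive coefficient $2\bag{\eta,\zeta}_k-\bag{\eta,\eta}_k$ (the paper re-derives this directly from $f(\wt x_{k+1})<f(\wt x_k)$ rather than quoting \eqref{eq_suff_dec_rewritten_update}, but the computation is identical). Two minor remarks: your chain ``$2\bag{\eta,\zeta}_k > 2\bag{\eta,\eta}_k$'' is a slip for $2\bag{\eta,\zeta}_k > \bag{\eta,\eta}_k$, and your concern about strict versus non-strict in the upper bound is unnecessary, since your own weakening step---replacing $(1-c_1)\bag{\eta,\eta}_k$ by $\bag{\eta,\eta}_k$ in the denominator---is already strict because $c_1>0$.
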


\begin{proof}[Proof of Lemma \ref{lemma_bounds_on_alpha_Wolfe}]
Consider any $k \in [K-1]$. 
Then the sufficient decrease condition \eqref{eq_suff_dec} yields
\eqals{
\eta^\intercal \wt{x}_k & \stackrel{\eqref{eq_suff_dec_rewritten}}{>}\eta^\intercal \wt{x}_{k+1} \\
	& \stackrel{\eqref{eq_update_formula_for_reflected_variables_x}}{=}\eta^\intercal V (\wt{x}_k -\alpha_k \wt{H}_k \eta)  \\
	& \stackrel{\eqref{eq_Veta}}{=}(\eta^\intercal - 2\zeta^\intercal) (\wt{x}_k -\alpha_k \wt{H}_k \eta) \\
	& = \eta^\intercal \wt{x}_k - 2 {\zeta^\intercal \wt{x}_k} - \alpha_k \bag{\eta - 2 \zeta, \eta}_k .
}
This implies $\alpha_k \bag{2 \zeta - \eta, \eta}_k < 2 {\zeta^\intercal \wt{x}_k} $, and with \eqref{lemma_lower_bound_on_alpha_2} we obtain the upper bound in \eqref{eq_constraints_alpha_k}.

	

The other bound is a consequence of the curvature condition \eqref{eq_curv}.  Lemma~\ref{lemma_lower_bound_on_alpha} yields $\bag{\eta, \zeta}_k > 0$, and then Theorem~\ref{thm_iterates} implies \eqref{eq_curv_rewritten_update_1}.  This gives the lower bound in \eqref{eq_constraints_alpha_k}.

%
\end{proof}

%

To further simplify the iterative relationship, we define the following new quantities:
\begin{align}
	\delta_k & = 1 - \frac{\bag{\eta, \eta}_k}{2\bag{\eta, \zeta}_k} \label{eq_def_delta} \\
	a_k & = \frac{\alpha_k}{\alpha_k^*} = \frac{\delta_k }{D_k}\bag{\eta, \zeta}_k^2 \alpha_k  \label{eq_def_a} \\
	\psi_k & = \frac{\bag{\eta, \zeta}_k \zeta^\intercal \wt{x}_k}{D_k} \label{eq_def_psi}\\
	\gamma_k & = \frac{\zeta^\intercal\wt{x}_k}{\bag{\eta, \zeta}_k} \label{eq_def_gamma}
\end{align}
We have the following lemma.
\begin{lemma}
\label{lemma_simple_bounds_for_new_variables}
Suppose the algorithm does not terminate up to iteration $K \geq 2$. Then we have for all $k \in [K-2]$ that
\eqals{
	\delta_k & \in (0, 1) \\
	a_k & > 0 \\
	\psi_k & > 0 \\
	\gamma_k & > 0.
}
\end{lemma}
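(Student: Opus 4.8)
The plan is to read off all four bounds from results already established, with essentially no new computation. Fix $k \in [K-2]$; since $K \geq 2$ this index set is nonempty, and since the algorithm does not terminate up to iteration $K$, every conclusion of Lemma~\ref{lemma_lower_bound_on_alpha} is available for indices in $[K]$, $[K-1]$, and $[K-2]$, each of which contains our $k$.

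First I would treat $\delta_k$. Putting \eqref{eq_def_delta} over a common denominator gives $\delta_k = \bpa{\bag{\eta, \zeta}_k - \tfrac12\bag{\eta, \eta}_k}/\bag{\eta, \zeta}_k$; the denominator is positive by \eqref{lemma_lower_bound_on_alpha_1} and the numerator is positive by \eqref{lemma_lower_bound_on_alpha_2} (both valid for $k \leq K-2$), so $\delta_k > 0$. Since $\wt{H}_k$ is positive definite and $\eta \neq 0$, we have $\bag{\eta, \eta}_k = \eta^\intercal \wt{H}_k \eta > 0$, and together with $\bag{\eta, \zeta}_k > 0$ this forces $\bag{\eta, \eta}_k/(2\bag{\eta, \zeta}_k) > 0$, hence $\delta_k < 1$. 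This proves $\delta_k \in (0,1)$. Next, $a_k > 0$ follows from \eqref{eq_def_a}: $a_k = \alpha_k/\alpha_k^*$, where $\alpha_k > 0$ because it is an accepted step size and $\alpha_k^* > 0$ by \eqref{lemma_lower_bound_on_alpha_star}. (One may equally well use the second expression in \eqref{eq_def_a}, a product of the positive quantities $\delta_k$, $\bag{\eta, \zeta}_k^2$, $\alpha_k$, and $1/D_k$, the last being positive by \eqref{eq_D_positive}.)

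It remains to deal with $\gamma_k$ and $\psi_k$, given by \eqref{eq_def_gamma} and \eqref{eq_def_psi}. Both are products of $\zeta^\intercal \wt{x}_k$ with quantities already known to be positive ($\bag{\eta, \zeta}_k$ and $1/D_k$), so it suffices to show $\zeta^\intercal \wt{x}_k = \idx{\wt{x}}{1}_k > 0$. For $k \geq 1$ this is exactly the first line of the curvature condition in the form \eqref{eq_curv_rewritten} applied at iteration $k-1$, which is legitimate since $k-1 \leq K-3 < K$ and the algorithm has not terminated there; for $k = 0$ it is the standing normalization $\idx{\wt{x}}{1}_0 = \idx{x}{1}_0 > 0$ built into the reflected variables in Section~\ref{sec:iter_relations}. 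With $\zeta^\intercal \wt{x}_k > 0$ in hand, $\gamma_k > 0$ and $\psi_k > 0$ are immediate.

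The proof is almost entirely bookkeeping: the only input not already recorded in Lemma~\ref{lemma_lower_bound_on_alpha} is the sign of $\zeta^\intercal \wt{x}_k$, and even that comes straight from the curvature condition. The one point requiring a little care --- and the closest thing to an obstacle --- is verifying that the index ranges line up, i.e.\ that restricting to $k \in [K-2]$ still leaves enough room to invoke \eqref{lemma_lower_bound_on_alpha_star} (which needs $k \leq K-1$) and the curvature condition at iteration $k-1$ (which needs $k-1 \in [K]$); both hold comfortably, so there is no real obstacle.
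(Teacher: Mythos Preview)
Your proof is correct and follows essentially the same approach as the paper's: the paper's own proof is a single sentence citing \eqref{eq_curv_rewritten}, \eqref{eq_D_positive}, \eqref{lemma_lower_bound_on_alpha_1}, \eqref{lemma_lower_bound_on_alpha_2}, and \eqref{lemma_lower_bound_on_alpha_star}, which are exactly the ingredients you unpack. Your treatment is more explicit --- in particular your separate handling of the $k=0$ case for $\zeta^\intercal \wt{x}_0 > 0$ and your careful index-range checks --- but there is no substantive difference in method.
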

\begin{proof}[Proof of Lemma \ref{lemma_simple_bounds_for_new_variables}]
These conditions can be derived easily from \eqref{eq_curv_rewritten}, \eqref{eq_D_positive}, \eqref{lemma_lower_bound_on_alpha_1}, \eqref{lemma_lower_bound_on_alpha_2}, and \eqref{lemma_lower_bound_on_alpha_star}.
\end{proof}

\begin{lemma}
\label{lemma_iterative_relationship_for_new_var}
Suppose the algorithm does not terminate up to iteration $K \geq 2$. The following iterative relations holds for all $k \in [K-2]$
\eqals{
	\psi_{k+1} & = \psi_k - \frac{\psi_k-a_k}{1-\delta_k}\\
	\gamma_{k+1} & = \frac{1}{\delta_k }-\frac{\psi_k }{a_k} \\
	\delta_{k+1} & = \bpa{1-\frac{1}{a_k}} (1-\delta_k) \\
        \delta_k \psi_k & < a_k < \psi_k \\
        a_k &> 1.
}
\end{lemma}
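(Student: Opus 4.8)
\emph{Proof proposal.}
The three equalities are obtained by inserting the scalar recursions of Theorem~\ref{thm_iterates} — in the form \eqref{eq_update_formula_for_compact_variables_ez}, \eqref{eq_update_formula_for_compact_variables_zx}, \eqref{eq_update_formula_for_compact_variables_ee_D}, \eqref{eq_update_formula_for_compact_variables_D} — into the definitions \eqref{eq_def_delta}, \eqref{eq_def_psi}, \eqref{eq_def_gamma} of $\delta_{k+1}$, $\psi_{k+1}$, $\gamma_{k+1}$, and then rewriting everything in the variables $\delta_k, a_k, \psi_k$ via three bookkeeping identities. Two follow at once from the definitions: $\bag{\eta,\eta}_k = 2\bag{\eta,\zeta}_k(1-\delta_k)$ and $\bag{\eta,\zeta}_k - \tfrac12\bag{\eta,\eta}_k = \delta_k\bag{\eta,\zeta}_k$. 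The third records that the reference step of Lemma~\ref{lemma_lower_bound_on_alpha} is $\alpha_k^\ast = D_k/(\delta_k\bag{\eta,\zeta}_k^2)$, so that $a_k = \alpha_k/\alpha_k^\ast$ gives $\alpha_k\delta_k\bag{\eta,\zeta}_k^2 = a_k D_k$, while the definition of $\psi_k$ gives $\zeta^\intercal\wt x_k\,\bag{\eta,\zeta}_k = \psi_k D_k$. The single calculation that makes everything collapse is that the square in \eqref{eq_update_formula_for_compact_variables_ee_D} equals $\bpa{\bag{\eta,\eta}_k - 2\bag{\eta,\zeta}_k}^2 = 4\delta_k^2\bag{\eta,\zeta}_k^2$, so $\bag{\eta,\eta}_{k+1} = \tfrac{D_k}{\bag{\eta,\zeta}_k^2}\bag{\eta,\eta}_k + 2\alpha_k\delta_k^2\bag{\eta,\zeta}_k$.

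With this in hand, first divide $\bag{\eta,\eta}_{k+1}$ by $2\bag{\eta,\zeta}_{k+1} = 2\alpha_k\delta_k\bag{\eta,\zeta}_k$; the bookkeeping identities turn the quotient into $\delta_k + (1-\delta_k)/a_k$, whence $\delta_{k+1} = 1 - \delta_k - (1-\delta_k)/a_k = (1-1/a_k)(1-\delta_k)$. Next, substitute \eqref{eq_update_formula_for_compact_variables_ez}, \eqref{eq_update_formula_for_compact_variables_zx}, \eqref{eq_update_formula_for_compact_variables_D} into $\psi_{k+1} = \bag{\eta,\zeta}_{k+1}\,\zeta^\intercal\wt x_{k+1}/D_{k+1}$, cancel $\alpha_k$ and the leftover $\bag{\cdot,\cdot}_k$ factors, and reach $(a_k - \delta_k\psi_k)/(1-\delta_k)$, which is exactly $\psi_k - (\psi_k - a_k)/(1-\delta_k)$. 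Finally, dividing \eqref{eq_update_formula_for_compact_variables_zx} by \eqref{eq_update_formula_for_compact_variables_ez} and applying the same substitutions yields $\gamma_{k+1} = 1/\delta_k - \psi_k/a_k$. All of this is legitimate on $[K-2]$: by \eqref{lemma_lower_bound_on_alpha_1} and Lemma~\ref{lemma_simple_bounds_for_new_variables}, $\bag{\eta,\zeta}_k>0$, $\delta_k\in(0,1)$, $a_k>0$ there, so no denominator vanishes.

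The two inequalities are not algebra but come from the line-search bounds already established. The bound $a_k>1$ is simply a rewriting of \eqref{lemma_lower_bound_on_alpha_3}, which states $\alpha_k>\alpha_k^\ast$ on $[K-2]$. For $\delta_k\psi_k < a_k < \psi_k$, insert the current-iteration bounds of Lemma~\ref{lemma_bounds_on_alpha_Wolfe}, namely $\zeta^\intercal\wt x_k/\bag{\eta,\zeta}_k < \alpha_k < \zeta^\intercal\wt x_k/(\delta_k\bag{\eta,\zeta}_k)$, into $a_k = \alpha_k\delta_k\bag{\eta,\zeta}_k^2/D_k$ and simplify with $\zeta^\intercal\wt x_k\,\bag{\eta,\zeta}_k = \psi_k D_k$: the upper bound on $\alpha_k$ gives $a_k<\psi_k$ and the lower bound gives $a_k>\delta_k\psi_k$, the signs of $D_k>0$, $\delta_k\in(0,1)$, $\bag{\eta,\zeta}_k>0$ (from \eqref{eq_D_positive}, Lemma~\ref{lemma_simple_bounds_for_new_variables}, \eqref{lemma_lower_bound_on_alpha_1}) keeping the inequalities pointing the right way. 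As a consistency check, the lower bound $a_k>\delta_k\psi_k$ also falls out of positivity of $\psi_{k+1}$ and of $\gamma_{k+1}$, each being a ratio of the positive quantities $\bag{\eta,\zeta}_{k+1}>0$, $\zeta^\intercal\wt x_{k+1}>0$ (the curvature condition \eqref{eq_curv_rewritten_update_1}), $D_{k+1}>0$, combined with the update formulas just derived.

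There is no real obstacle here: the work is a careful substitution, and the only things to watch are (a) spotting the collapse $\bag{\eta,\eta}_k - 2\bag{\eta,\zeta}_k = -2\delta_k\bag{\eta,\zeta}_k$ that kills the messy quadratic term, and (b) checking that every auxiliary fact borrowed from Lemmas~\ref{lemma_lower_bound_on_alpha}, \ref{lemma_bounds_on_alpha_Wolfe}, \ref{lemma_simple_bounds_for_new_variables} is invoked on an index range containing $[K-2]$, so that all five conclusions are genuinely valid on $[K-2]$.
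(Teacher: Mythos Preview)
Your proposal is correct and follows essentially the same approach as the paper: substitute the scalar recursions \eqref{eq_update_formula_for_compact_variables_ez}, \eqref{eq_update_formula_for_compact_variables_zx}, \eqref{eq_update_formula_for_compact_variables_ee_D}, \eqref{eq_update_formula_for_compact_variables_D} into the definitions of $\delta_{k+1},\psi_{k+1},\gamma_{k+1}$ and simplify, then read off $a_k>1$ from \eqref{lemma_lower_bound_on_alpha_3} and $\delta_k\psi_k<a_k<\psi_k$ from Lemma~\ref{lemma_bounds_on_alpha_Wolfe}. Your explicit isolation of the identity $\bag{\eta,\eta}_k-2\bag{\eta,\zeta}_k=-2\delta_k\bag{\eta,\zeta}_k$ and the bookkeeping relations $\alpha_k\delta_k\bag{\eta,\zeta}_k^2=a_kD_k$, $\zeta^\intercal\wt x_k\,\bag{\eta,\zeta}_k=\psi_kD_k$ makes the algebra cleaner than the paper's line-by-line substitutions, but the underlying argument is identical.
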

\begin{proof}[Proof of Lemma \ref{lemma_iterative_relationship_for_new_var}]
Assume $k \in [K-2]$. 
Consider $\psi$ first. We have
\eqals{
	\psi_{k+1} & \stackrel{\eqref{eq_def_psi}}{=} \frac{\bag{\eta, \zeta}_{k+1} \zeta^\intercal \wt{x}_{k+1}}{D_{k+1}} \\
	&  \stackrel{\eqref{eq_update_formula_for_compact_variables_ee_D},\eqref{eq_update_formula_for_compact_variables_ez},\eqref{eq_update_formula_for_compact_variables_zx}}{=} \frac{2 \langle \eta ,\zeta \rangle _k \left(\langle \eta ,\zeta \rangle _k-\frac{1}{2} \langle \eta ,\eta \rangle _k\right) \left(\alpha _k \langle \eta ,\zeta \rangle _k-\zeta^\intercal  \wt{x}_k\right)}{D_k \langle \eta ,\eta \rangle _k} \\
	& \stackrel{\eqref{eq_def_delta},\eqref{eq_def_a},\eqref{eq_def_psi},\eqref{eq_def_gamma}}{=} \frac{\psi _k \left(\langle \eta ,\zeta \rangle _k-\left(1-\delta _k\right) \langle \eta ,\zeta \rangle _k\right) \left(\frac{a_k \gamma _k \langle \eta ,\zeta \rangle _k}{\delta _k \psi _k}-\gamma _k \langle \eta ,\zeta \rangle _k\right)}{\gamma _k \left(1-\delta _k\right) \langle \eta ,\zeta \rangle _k^2} \\
	& = \frac{\delta _k \psi _k-a_k}{\delta _k-1} \\
	& = \psi_k - \frac{\psi_k-a_k}{1-\delta_k}\\
}

Then consider $\gamma$, we have
\eqals{
	\gamma_{k+1} & \stackrel{\eqref{eq_def_gamma}}{=} \frac{\zeta^\intercal  \wt{x}_{k+1}}{\langle \eta ,\zeta \rangle _{k+1}} \\
	& \stackrel{\eqref{eq_update_formula_for_compact_variables_ez},\eqref{eq_update_formula_for_compact_variables_zx}}{=} \frac{\alpha _k \langle \eta ,\zeta \rangle _k-\zeta^\intercal  \wt{x}_k}{\alpha _k \left(\langle \eta ,\zeta \rangle _k-\frac{1}{2} \langle \eta ,\eta \rangle _k\right)} \\
	& = \frac{\delta _k \psi _k \left(\frac{a_k \gamma _k \langle \eta ,\zeta \rangle _k}{\delta _k \psi _k}-\gamma _k \langle \eta ,\zeta \rangle _k\right)}{a_k \gamma _k \left(\langle \eta ,\zeta \rangle _k-\left(1-\delta _k\right) \langle \eta ,\zeta \rangle _k\right)} \\
	& = \frac{1}{\delta _k}-\frac{\psi _k}{a_k}
}

Finally, consider $\delta$, we have
\eqals{
	\delta_{k+1} & = 1-\frac{\langle \eta ,\eta \rangle _{k+1}}{2 \langle \eta ,\zeta \rangle _{k+1}} \\
	& = 1-\frac{\frac{D_k \langle \eta ,\eta \rangle _k}{\langle \eta ,\zeta \rangle _k^2}+\frac{\alpha _k \left(\langle \eta ,\eta \rangle _k-2 \langle \eta ,\zeta \rangle _k\right){}^2}{2 \langle \eta ,\zeta \rangle _k}}{2 \alpha _k \left(\langle \eta ,\zeta \rangle _k-\frac{1}{2} \langle \eta ,\eta \rangle _k\right)} \\
	& = 1-\frac{\delta _k \psi _k \left(\frac{a_k \gamma _k \left(2 \left(1-\delta _k\right) \langle \eta ,\zeta \rangle _k-2 \langle \eta ,\zeta \rangle _k\right){}^2}{2 \delta _k \psi _k \langle \eta ,\zeta \rangle _k}+\frac{2 \gamma _k \left(1-\delta _k\right) \langle \eta ,\zeta \rangle _k}{\psi _k}\right)}{2 a_k \gamma _k \left(\langle \eta ,\zeta \rangle _k-\left(1-\delta _k\right) \langle \eta ,\zeta \rangle _k\right)} \\
	& = \frac{a_k \left(-\delta _k\right)+a_k+\delta _k-1}{a_k} \\
	& = \bpa{1-\frac{1}{a_k}} (1-\delta_k)
}

For $a$, Lemma \ref{lemma_lower_bound_on_alpha} yields $\alpha_k > \alpha_k^*$. Since $a_k = \alpha_k / \alpha_k^*$, we immediately have $a_k > 1$ by \eqref{eq_def_a}.
Then, consider Lemma \ref{lemma_bounds_on_alpha_Wolfe}, we have
\eqals{
	\frac{\zeta^\intercal \wt{x}_k}{\bag{\eta, \zeta}_k} < \alpha_k < \frac{{\zeta^\intercal \wt{x}_k}}{\bag{\eta, \zeta}_k - \frac{1}{2} \bag{\eta, \eta}_k}
}
Note that
\eqals{
	\delta_k = \frac{\zeta^\intercal \wt{x}_k}{\bag{\eta, \zeta}_k} \Bigg / \frac{{\zeta^\intercal \wt{x}_k}}{\bag{\eta, \zeta}_k - \frac{1}{2} \bag{\eta, \eta}_k}
}
and that
\eqals{
	\psi_k = \frac{{\zeta^\intercal \wt{x}_k}}{\bag{\eta, \zeta}_k - \frac{1}{2} \bag{\eta, \eta}_k} \Bigg / \alpha_k^*
}
we immediately get 
\eqals{
	\delta_k \psi_k < a_k < \psi_k.
	}
\end{proof}

Lemma \ref{lemma_iterative_relationship_for_new_var} shows that if the algorithm does not terminate up to iteration $K$, then the ``normalized'' step size $a_k$ must satisfies
\eqal{\label{eq_squeeze_a}
	\max \bst{\delta_k \psi_k, 1}  < a_k < \psi_k \cm \forall k \in [K-2].
}
The key observation here is that this condition can only be satisfied if
\eqals{
	\psi_k > 1.
}
However, we note that
\eqals{
	\psi_{k+1} & = \psi_k - \frac{\psi_k-a_k}{1-\delta_k},
}
which shows that $\bst{\psi_k}$ is a decreasing sequence, and we can exploit this fact to show that the algorithm always terminates finitely.

Now we are ready to prove the main result. 
\begin{proof}[Proof of Theorem~\ref{thm:main}]
We prove by contradiction. \textbf{Suppose the algorithm does not terminate}, generating infinite sequence $\bst{\wt{x}_k\cm k = 1,2,3,...}$. Then, by Lemma \ref{lemma_iterative_relationship_for_new_var}, we have for all $k \geq 0$ that
\eqal{\label{eq_update_psi}
	\psi_{k+1} & = \psi_k -\frac{\psi_k -a_k}{1 - \delta_k}.
}
Since $a_k \in \bpa{\max\{1, \delta_k \psi_k\}, \psi_k}$ by \eqref{eq_squeeze_a}, and that $\delta_k \in (0,1)$ by Lemma \ref{lemma_simple_bounds_for_new_variables}, we know that
\eqals{
	\psi_{k+1} < \psi_k.
}
Also, since the algorithm does not terminate, we must always have $\psi_{k} > 1$ by \eqref{eq_squeeze_a}. Therefore, $\{\psi_k\}$ is a decreasing sequence that is bounded below by $1$. It then follows that there exists $\psi^* \geq 1$ such that
\eqals{
	\lim_{k \to \infty} \psi_k & = \psi^*.
}

Using \eqref{eq_update_psi}, we then have
\eqals{
	0 < \psi^*-\psi_0 =  \sum_{k=0}^\infty \left(\psi_{k+1}- \psi_k\right) = \sum_{k=0}^\infty \frac{\psi_k - a_k}{1-\delta_k} < \infty,
}
which implies
\eqals{
	0 < \sum_{k=0}^\infty \bpa{\psi_k - a_k} < \sum_{k=0}^\infty \frac{\psi_k - a_k}{1-\delta_k} < \infty.
}
Also, $\psi_k - a_k > 0$ by \eqref{eq_squeeze_a}. Therefore, it must follows that
\eqals{
	\lim_{k \to \infty} \bpa{\psi_k - a_k} = 0,
}
i.e.,
\eqal{\label{eq_lim_a_psi}
	\lim_{k \to \infty} a_k = \lim_{k \to \infty} \psi_k = \psi^*.
}
Therefore, if the algorithm does not terminate, $a_k$ would have the same limit as $\psi_k$. 

Now we need to show that $\delta_k$ also attains a limit. Let $\delta^*$ be the unique solution to
\eqals{
	\delta^* = \bpa{1-\frac{1}{\psi^*}}(1-\delta^*).
}
From Lemma~\ref{lemma_iterative_relationship_for_new_var} we have
\eqals{
	\delta_{k+1} & = \bpa{1-\frac{1}{a_k}}(1-\delta_k) \\
}
We have
\eqal{\label{eq_delta_kp1_star}
	 \abs{\delta_{k+1} - \delta^*} 
	= & \abs{\bpa{1-\frac{1}{a_k}}(1-\delta_k) - \bpa{1-\frac{1}{\psi^*}}(1-\delta^*)} \\
	= & \abs{\bpa{1-\frac{1}{a_k}}-\bpa{1-\frac{1}{a_k}}\delta_k - \bpa{1-\frac{1}{\psi^*}} +\bpa{1-\frac{1}{\psi^*}}\delta^*} \\
	= & \abs{\bpa{1-\frac{1}{a_k}}- \bpa{1-\frac{1}{\psi^*}} -\bpa{1-\frac{1}{a_k}}\delta_k +\bpa{1-\frac{1}{a_k}}\delta^* + \bpa{\frac{1}{a_k}-\frac{1}{\psi^*}}\delta^*} \\
	= & \abs{\bpa{\frac{1}{\psi^*}-\frac{1}{a_k}} -\bpa{1-\frac{1}{a_k}}\delta_k +\bpa{1-\frac{1}{a_k}}\delta^* + \bpa{\frac{1}{a_k}-\frac{1}{\psi^*}}\delta^*} \\
	= & \abs{\bpa{\frac{1}{\psi^*}-\frac{1}{a_k}}(1-\delta^*) -\bpa{1-\frac{1}{a_k}}(\delta_k-\delta^*)} \\
	< & (1-\delta^*)\abs{{\frac{1}{\psi^*}-\frac{1}{a_k}}} + \bpa{1-\frac{1}{a_k}}\abs{\delta_k-\delta^*} \\ 
	< & \abs{{\frac{1}{\psi^*}-\frac{1}{a_k}}} + \bpa{1-\frac{1}{\psi_0}}\abs{\delta_k-\delta^*} ,
}
where we used $\psi_0>\psi_k>a_k > 1$ and $\delta^*<1$ in the last inequality.
Let 
\eqals{
	\Delta_k = \abs{{\frac{1}{\psi^*}-\frac{1}{a_k}}}
}
and 
\eqals{
	\rho = \bpa{1-\frac{1}{\psi_0}} \in (0,1).
}
Inequality \eqref{eq_delta_kp1_star} can then be written as
\eqals{
	\abs{\delta_{k+1} - \delta^*} < \Delta_k + \rho \abs{\delta_k - \delta^*}. 
}
which means that
\eqals{
	\abs{\delta_{k+l} - \delta^*} & < \sum_{j=0}^{l-1} \Delta_{k+j} \rho^{l-1-j} + \rho^l \abs{\delta_k - \delta^*} \\
	& < \sum_{j=0}^{l-1} \Delta_{k+j} \rho^{l-1-j} + \rho^l,
}
where the last inequality comes from $\delta_k,\delta^*\in(0,1)$. Further,
\eqals{ 
	\sum_{j=0}^{l-1} \Delta_{k+j} \rho^{l-1-j} + \rho^l 
	& < \max_{j \geq 0} \bst{\Delta_{k+j}} \sum_{j = 0}^\infty \rho^j + \rho^l \\
	& < \frac{1}{1-\rho}\max_{j \geq 0} \bst{\Delta_{k+j}} + \rho^l \\
	& = \frac{1}{1-\rho}\max_{j \geq k} \bst{\Delta_{j}} + \rho^l
}
Let
\eqals{
	m_k = \max_{j \geq k} \bst{\Delta_{j}}
}
then we have 
\eqal{
	\abs{\delta_{k+l} - \delta^*} & < \frac{1}{1-\rho}m_k + \rho^l
}
Note that $\Delta_k \to 0$ since $a_k \to \psi^* \geq 1$, implying that the sequence $m_k \to 0$ as well. Therefore, for any $\epsilon > 0$, we can find a $\overline{k} > 0$ such that $m_k < (1-\rho)\epsilon/2$ whenever $k \geq \overline{k}$; we can also find an $\overline{l}$ such that $\rho^l < \epsilon/2$ whenever $l \geq \overline{l}$. Then, whenever $n \geq \overline{k} + \overline{l}$, we have $\abs{\delta_n - \delta^*} < \epsilon$. This implies that 
\eqals{
	\lim_{k \to \infty} \abs{\delta_k - \delta^*} = 0.
}
As a final remark about $\delta_k$, we note that
\eqal{\label{eq_delta_star}
	\delta^* = \frac{1-1/\psi^*}{2-1/\psi^*} \in \left[0, \frac{1}{2}\right).
}
From the iterative relationship (see Lemma~\ref{lemma_iterative_relationship_for_new_var})
\eqals{
	\gamma_{k+1} = \frac{1}{\delta_k }-\frac{\psi_k }{a_k},
}
using \eqref{eq_lim_a_psi} we immediately see that the limit of $\gamma_k$ also exists (possibly infinite), with
\eqals{
	\gamma^* = \lim_{k \to \infty} \gamma_k & = \frac{1}{\delta^*} - 1 \in (1, +\infty].
}

Now we are ready to establish the contradiction. From the analysis above, we see that if we assume the algorithm does not terminate, we would deduce that $a_k$ goes to $\psi_k$ asymptotically. Recall that the condition
\eqals{
	a_k < \psi_k
}
comes from the requirement that the function value must be decreased at each iteration:
\eqals{
	f(\wt{x}_{k+1}) < f(\wt{x}_k).
}
This means that we are gradually taking steps with \textbf{very little decrease} in function value, and this should be prohibited by the sufficient decrease condition for $c_1 > 0$. 

Indeed, by the sufficient decrease condition in \eqref{eq_suff_dec_rewritten_update},
\eqals{
\alpha_k \bbr{2 \bag{\eta, \zeta}_k - \bpa{1-c_1}\bag{\eta, \eta}_k}  < 2 \zeta^\intercal \wt{x}_k
}
using the fact that $\bag{\eta, \zeta}_k > 0$ and $\gamma_k > 0$ as well as \eqref{eq_def_delta}--\eqref{eq_def_gamma}, we have 
\eqals{
 2 \gamma _k \langle \eta ,\zeta \rangle _k  = 2 \zeta^\intercal \wt{x}_k 
 	& < \frac{2 a_k \gamma _k \left(\delta _k-c_1 \left(\delta _k-1\right)\right) \langle \eta ,\zeta \rangle _k}{\delta _k \psi _k},
}
and so
\eqals{
1 < \frac{a_k \left(\delta _k-c_1 \left(\delta _k-1\right)\right)}{\delta _k \psi _k} =  \frac{c_1 a_k \left(1-\delta _k\right)}{\delta _k \psi _k}+\frac{a_k }{\psi _k},
}
which finally gives
\eqals{
\bpa{1-\frac{a_k}{\psi_k}}  > \frac{a_k  (1-\delta_k)}{\delta_k \psi_k} c_1
}
for all $k$.
However, this is impossible, since the left-hand side has a limit of $0$, while the right-hand side has a limit that is strictly larger than $0$ (see \eqref{eq_lim_a_psi} and \eqref{eq_delta_star}). Therefore, there is a contradiction, and thus the algorithm always terminates as long as we require $c_1 > 0$. This completes the proof for the main result.
\end{proof}

\begin{remark}
We point out that since $p_k$ is always a descend direction, we can always find a step size that satisfies the sufficient decrease condition. Therefore, whenever the BFGS terminates, the cause must be that it cannot find a step size that satisfies the curvature condition. When this happens, the function must be unbounded along the search direction, which means that the algorithm actually drives the function value to $-\infty$ in this case. Thus, the analysis above show that BFGS behaves in the desired way on this simple example in that it never gets stuck at a point where the function is not differentiable, and that it always drives the function value to $-\infty$. 
\end{remark}

\begin{remark}\label{rem_generalized}
We can generalize this observation to obtain a limited convergence result for \emph{any} piecewise linear function $g$:
If the unadulterated BFGS method converges to a limit point $x^*$ in a manifold of non-differentiable points with dimension 1 (i.e., only two linear pieces intersect), the limit point must be a local minimizer $g$; that is, $x^*$ cannot be a spurious solution.

This can be seen by the following argument.  If $\{x_k\}$ converges to $x^*$, then the iterates will eventually be contained in a neighborhood in which $g$ looks like a piecewise linear function with two pieces \eqref{eq_g_affine}, i.e., $g(x) = \abs{v_1^\intercal x} + v_2^\intercal x$.  If $v_1$ and $v_2$ were linearly independent, Theorems \ref{thm_equivalent} and \ref{thm:main} would imply that the algorithm cannot converge.  

Therefore, $v_1$ and $v_2$ must be linearly dependent, and we have $g(x) = |v_1^\intercal x| + \beta v_1^\intercal x$ for some $\beta\in\mathbb R$.  Theorem~\ref{thm_equivalent} then implies that the algorithm generates iterates that are equivalent to those generated for the objective function $f(x)=|\idx{x}{1}|+\beta \idx{x}{1}$ for a different initialization.   If $\abs{\beta} > 1$, the function is unbounded below. In this case, the algorithm terminates in at most two iterations with an unbounded search direction. To see this, assume without loss of generality that $\beta > 1$. Since the search direction $p_k$ is always a descent direction, we must have $\idx{p}{1}_k < 0$. If $\idx{x}{1}_0 < 0$, then the algorithm would terminate immediately because the gradient is constant along $\idx{p}{1}_0$ and the curvature condition cannot be satisfied for any step length. If $\idx{x}{1}_0 > 0$, then we must have $\idx{x}{1}_1 < 0$ due to the curvature condition.  Now the previous argument applies for the second iteration because $\idx{x}{1}_1 < 0$, and so no step size satisfying the curvature condition exists.

Hence, if $\{x_k\}$ converges to $x^*$, the only possible case is $\abs{\beta} \leq 1$.  Since $x^*$ is in a manifold of non-differentiable points with dimension 1, we must have $\idx{(x^*)}{1}=0$ and so $x^*$ is in the set of local minimizer for this specific function $f$.

\end{remark}

\section{Numerical Results}\label{sec:numerical}

In this section, we  provide some numerical results which verify the correctness of our analysis above. The computations were performed in Mathematica, with a precision of 500 digits. 
The line search algorithm used is the standard Armijo-Wolfe bracketing line search, with parameters $c_1 = 1 \times 10^{-4}$, $c_2 = 1/2$ as is used in \cite{lewis2013nonsmooth}. We sample the initial Hessian inverse approximation $H_0$ as
\eqals{
	H_0 = X^\intercal X \cm \idx{X}{i, j} \overset{\text{i.i.d}}{\sim} N(0, 1) 
}
and the initial point is sampled as 
\eqals{
	x_0(i) \overset{\text{i.i.d}}{\sim} N(0, 1).
}
The algorithm is terminated when the number of line search trials exceeds $1000$. 

To demonstrate the typical number of iterations before the algorithm terminates, we ran $5000$ experiments for each of the dimensions $n = 2, 3, 5, 10, 30,$ and $100$ respectively.  The outcomes are summarized in Table \ref{tab_num_of_iterations}. 

\begin{table}[h!]
\centering
\caption{\label{tab_num_of_iterations}Number of iterations before the algorithm terminates.}
\begin{tabular}{ c c c c c  } 
 \hline \hline
 $n$ & Min & Max & Mean & Median \\
 \hline 
 2 & 1. & 23. & 2.7862 & 2. \\
 3 & 1. & 11. & 1.8106 & 2. \\
 5 & 1. & 6. & 1.4034 & 1. \\
 10 & 1. & 2. & 1.0696 & 1. \\
 30 & 1. & 1. & 1. & 1. \\
 100 & 1. & 1. & 1. & 1. \\
 \hline \hline
\end{tabular}
\end{table}

As a numerical verification of our analysis as well as a demonstration of the idea of the proof, we take the iterates generated by the algorithm (with $(c_1, c_2) = (1\times10^{-4}, 1/2)$) for one particular instance of the experiment. The example run has dimension $n = 3$, and is initialized as
\eqals{
	x_0 & = (1.24613,0.974014,-2.07329)^\intercal \\
	H_0 & = \left(
			\begin{array}{ccc}
			 2.9778 & -0.614829 & -0.764638 \\
			 -0.614829 & 0.93846 & -0.699262 \\
			 -0.764638 & -0.699262 & 1.11173 \\
			\end{array}
			\right)
}
and terminates at iteration $k = 9$. Since it does not terminate in iteration $k=8$, for $k \in \bst{0,1,2,...,6}$, we have $\delta_k \psi_k < a_k < \psi_k$ by Lemma~\ref{lemma_iterative_relationship_for_new_var}. As a verification of our analysis, we plot the logarithm of the value $\delta_k \psi_k$, $a_k$, and $\psi_k$ for $k \in \bst{0,1,2,...,6}$, as shown in Figure \ref{fig_verification}. We  observe that indeed  $\delta_k \psi_k < a_k < \psi_k$, $\delta_k \in (0,1)$, $\psi_k > 1$, and that $\bst{\psi_k}$ decreases monotonically, as predicted by Lemma~\ref{lemma_iterative_relationship_for_new_var}.

In addition, we consider the same example, but now with line search parameters $(c_1, c_2) = (0, 1)$. This time the algorithm continues for two more iterations and terminates at iteration $k = 10$. Figure \ref{fig_verification_2} shows the corresponding quantities. This example is interesting in that we  observe that $\psi_k$  becomes less than $1$ (i.e. $\log \psi_k$ becomes negative) at $k = 8$. Our analysis above predicts that this results in the algorithm's termination at iteration $k = 8 + 2 = 10$, which is precisely what happens. These observations are consistent with our analysis above. 

\begin{figure}[!h]
	\centering
	\includegraphics[width=.8\textwidth]{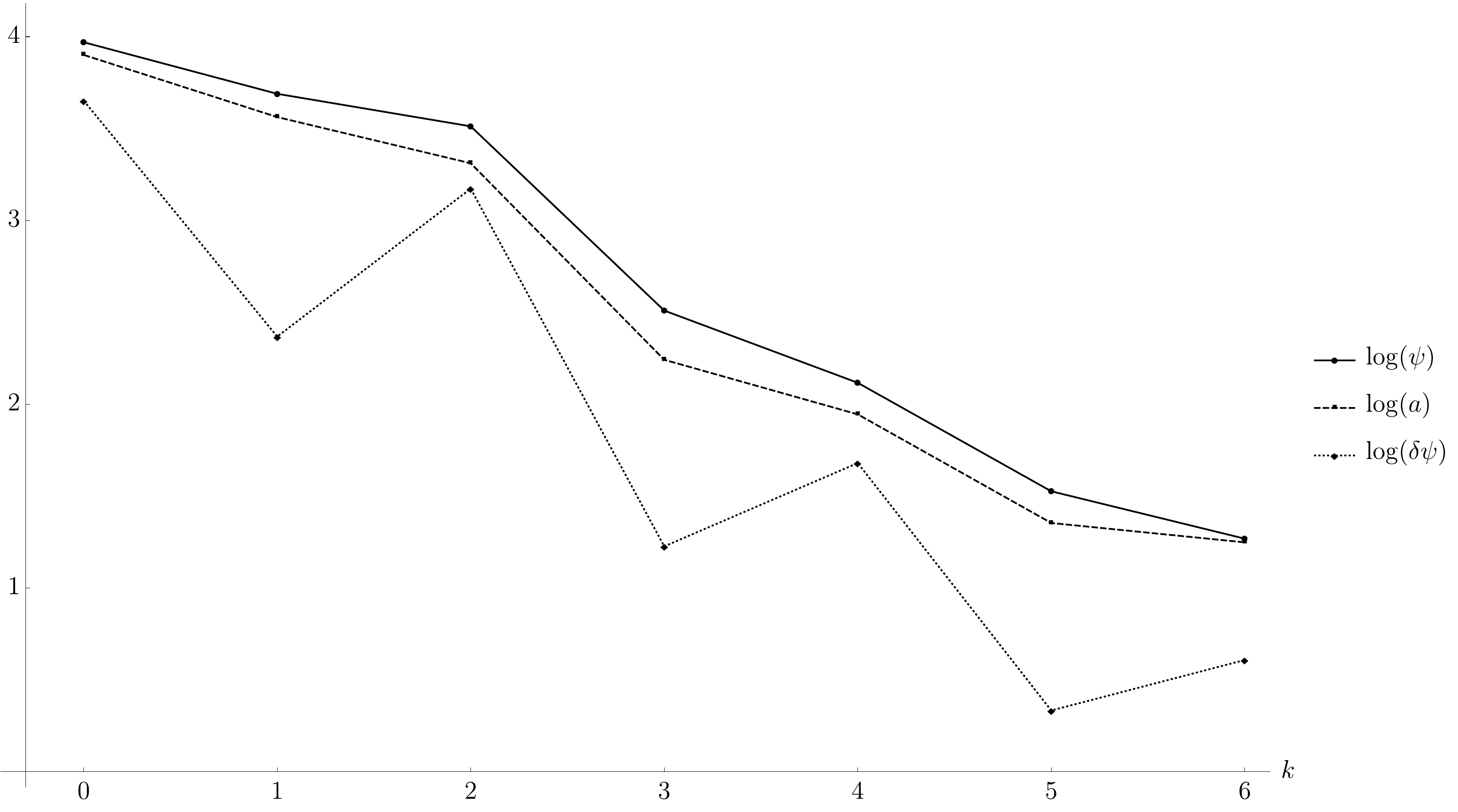}
	\caption{Value $(\log(\delta_k \psi_k), \log(a_k), \log(\psi_k))$ for iterates in one run of the experiment with $(c_1, c_2) = (1\times 10^{-4}, 1/2)$. }
	\label{fig_verification}
\end{figure}

\begin{figure}[!h]
	\centering
	\includegraphics[width=.8\textwidth]{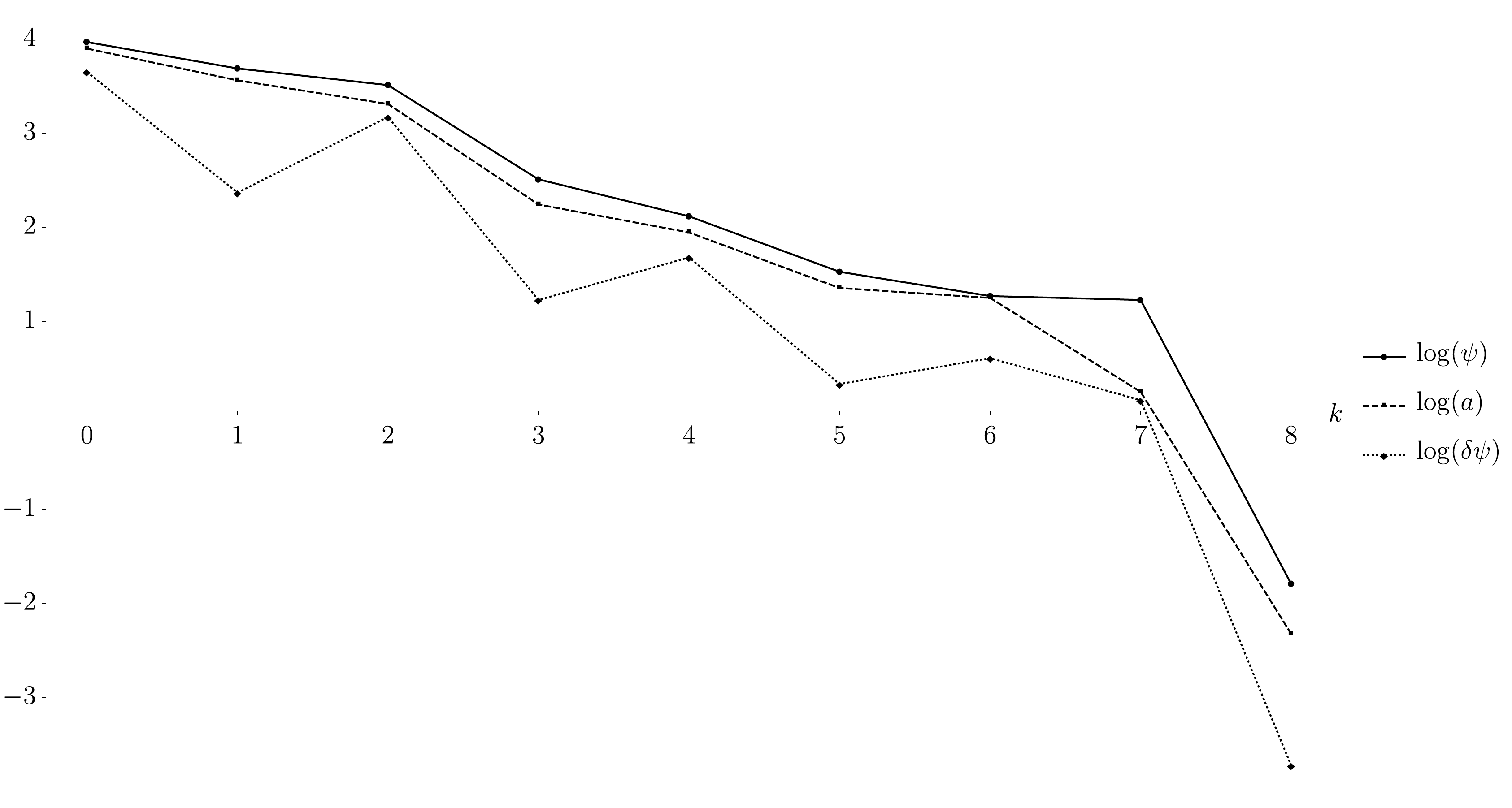}
	\caption{Value $(\log(\delta_k \psi_k), \log(a_k), \log(\psi_k))$ for iterates in one run of the experiment with $(c_1, c_2) = (0, 1)$. }
	\label{fig_verification_2}
\end{figure}

\section{Conclusion}\label{sec:conclude}
In this paper, we analyzed the behavior of the unadulterated BFGS method with an Armijo-Wolfe line search on a special family of non-smooth functions, namely, $g(x) = \abs{v_1^\intercal x} + v_2^\intercal x$ where $\bst{v_1, v_2}$ are linear independent. We have proved that the BFGS method always terminates in a finite number of iterations on this family of functions, driving the function values to $-\infty$.  We presented numerical results consistent with our theoretical analysis. Although limited to this special family, we hope that our analysis of BFGS will shed light on the behavior of the BFGS method on general non-smooth functions, specifically, on the reason why BFGS never converges to a point of non-differentiability that is not a stationary point. 

\newpage
\bibliographystyle{unsrt}
\bibliography{main}
\end{document}